\documentclass[lettersize,journal]{IEEEtran}
%
% If IEEEtran.cls has not been installed into the LaTeX system files,
% manually specify the path to it like:
% \documentclass[journal]{../sty/IEEEtran}

\usepackage{cite}
\usepackage{amsmath,amssymb,amsfonts}
\usepackage{graphicx}
\usepackage{textcomp}
\usepackage{float}
\usepackage{multicol}
\usepackage{amsmath}
\usepackage{amsthm}
\usepackage{amssymb}
\usepackage{amsfonts}
\usepackage{graphicx}
\usepackage{subfigure}
\usepackage{url}
\usepackage{booktabs}
\usepackage{float}
\usepackage{color}
\usepackage{bm}
\usepackage{algorithm}
\usepackage{algpseudocode}

\newcommand{\col}{\hbox{col}}
\newtheorem{assmp}{\bf Assumption}

\newtheorem{rem}{\bf Remark}

\newtheorem{lem}{\bf Lemma}

\newtheorem{thm}{\bf Theorem}

\newtheorem{defin}{\bf Definition}

\newtheorem{problem}{\bf Problem}

\newcommand{\EQ}{\begin{eqnarray}}
	\newcommand{\EN}{\end{eqnarray}}
\newcommand{\EQQ}{\begin{eqnarray*}}
	\newcommand{\ENN}{\end{eqnarray*}}

\ifCLASSINFOpdf
% \usepackage[pdftex]{graphicx}
% declare the path(s) where your graphic files are
% \graphicspath{{../pdf/}{../jpeg/}}
% and their extensions so you won't have to specify these with
% every instance of \includegraphics
% \DeclareGraphicsExtensions{.pdf,.jpeg,.png}
\else
% or other class option (dvipsone, dvipdf, if not using dvips). graphicx
% will default to the driver specified in the system graphics.cfg if no
% driver is specified.
% \usepackage[dvips]{graphicx}
% declare the path(s) where your graphic files are
% \graphicspath{{../eps/}}
% and their extensions so you won't have to specify these with
% every instance of \includegraphics
% \DeclareGraphicsExtensions{.eps}
\fi

\hyphenation{op-tical net-works semi-conduc-tor}

\begin{document}
	\title{Data-Driven Cooperative Output Regulation via Distributed Internal Model}

	\author{Liquan~Lin~and~Jie~Huang,~\IEEEmembership{Life Fellow,~IEEE}
		\thanks{The work described in this paper was  supported by a
grant from the Research Grants Council of the Hong Kong Special Administrative
Region, China [Project No.: CUHK 14203924].}
\thanks{The authors are  the Department of Mechanical and Automation Engineering, The Chinese University of Hong Kong, Hong Kong (e-mail: lqlin@mae.cuhk.edu.hk; jhuang@mae.cuhk.edu.hk. Corresponding author: Jie Huang.)}}
	
	% The paper headers

	\maketitle
	
	\begin{abstract}
The existing result on the cooperative output regulation problem for unknown linear multi-agent systems using a data-driven distributed internal model approach is
limited to the case where each follower  is a single-input and single-output system and the communication network among all agents is an  acyclic static digraph.
In this paper, we further address the same problem for unknown linear multi-agent  systems with multi-input and multi-output followers over a general static and connected digraph. Further we make two main improvements over the existing result. First, we derive a set of much simplified linear systems to be applied by the integral reinforcement learning technique. Thus, the number of the unknown variables governed by a sequence of linear algebraic equations  is much smaller than that of the existing approach. Second, we show that the sequence of linear algebraic equations can be further decoupled to two sequences of linear algebraic equations.
As a result of these two improvements, our approach not only drastically reduces the computational cost, but also significantly weakens the solvability conditions in terms of the the full column rank requirements for these equations.
	
	\end{abstract}
	
	% Note that keywords are not normally used for peerreview papers.
	\begin{IEEEkeywords}
		Cooperative output regulation, multi-agent systems, reinforcement learning, distributed internal model.
	\end{IEEEkeywords}

	\IEEEpeerreviewmaketitle

    \section{Introduction}
    The past two decades have witnessed flourishing research activities on cooperative control of multi-agent systems (MASs) in various topics such as consensus \cite{jadbabaie2003coordination}\cite{Olfati-Saber-Fax-Murray-2007}\cite{Ren-Beard-2008book}\cite{ma2023}, formation \cite{Olfati-Saber02}\cite{ahn2015}, the cooperative output regulation \cite{su2011}\cite{su2012general} \cite{cai2022cooperative} etc. In particular,
    the cooperative output regulation problem (CORP) has garnered significant attention  due to its immense  applications. The cooperative output regulation problem is a generalization of the output regulation problem for a single-agent system to  a multi-agent system. There are two approaches  for dealing with the output regulation problem (ORP): feedforward  design and  internal model principle \cite{Da}\cite{FW}\cite{Fr}\cite{huang2004}.
   Extending these two approaches to multi-agent systems leads to the so-called distributed-observer approach  \cite{su2011} \cite{cai2022cooperative} and distributed-internal-model approach \cite{su2012general}  \cite{cai2022cooperative}.
   A drawback of the feedforward design and hence the distributed observer approach is that they both make use of the feedforward control, which relies on the solution of the regulator equations, and hence requires the exact system model.

    Recently, there is a surge of interest in studying both ORP for unknown linear systems and CORP for unknown linear multi-agent systems, thus leading to the so-called data-driven approach, which integrates the reinforcement learning (RL) \cite{sutton2018reinforcement}\cite{bertsekas2019reinforcement}, also called adaptive dynamic programming (ADP), technique and the feedforward  design or internal model principle for dealing with ORP, or integrates the reinforcement learning  technique and the
    distributed-observer approach or the distributed-internal-model approach to deal with CORP.

     The reinforcement learning technique has broad applications in many areas. Its application to solving the LQR problem for unknown continuous-time linear systems
     may be traced back to  \cite{vrabie2009adaptive} which studied the LQR problem for linear systems with unknown system matrix by  presenting a policy-iteration (PI) method for solving an algebraic Riccati equation online. Reference \cite{jiang2012computational} further extended the result of \cite{vrabie2009adaptive} to  linear systems with unknown system matrix and input matrix by the PI-based method. However, the PI-based method  requires an initially stabilizing feedback gain to start the iteration process.  Reference \cite{bian2016} broke this barrier by proposing a value-iteration (VI) method to tackle the same  problem as that in \cite{jiang2012computational}, which can start the iteration without an initially stabilizing feedback gain.
     On the basis of the approaches of \cite{jiang2012computational} and \cite{bian2016},   references \cite{gao2016adaptive} and \cite{jiang2022value} studied the optimal output regulation problem for unknown linear systems by combining the feedforward  design with the PI-based method and VI-based method, respectively. Reference \cite{lin2024arxiv}
     further considered the optimal output regulation problem for unknown linear systems by combining the internal model principle with the VI-based method.

     By integrating the technique presented in \cite{gao2016adaptive} and the distributed observer detailed in \cite{su2011}, reference \cite{gao2017cooperative} considered the optimal CORP for linear multi-agent system with unknown state equations. Reference \cite{lin2023}  improved the approach in \cite{gao2017cooperative} by reducing the computational cost and weakening the solvability conditions. Reference \cite{lin2024} further considered the optimal CORP for linear multi-agent system with unknown state equations over jointly connected switching networks. 
     Due to the use of feedforward control, the approaches in \cite{gao2017cooperative}, \cite{lin2023} and \cite{lin2024} require the output equation to be known.
     To deal with the unknown output equation,
     reference \cite{gao2021reinforcement} developed both PI-based and VI-based methods to handle the optimal CORP by combining the distributed-observer  and distributed-internal-model approach.

    However,  the approach in  \cite{gao2021reinforcement} only applies to multi-agent system with  single-input and single-output (SISO) followers over  acyclic communication networks. In this paper, we will further consider the CORP  for multi-agent system with  multi-input and multi-output (MIMO) followers without assuming the network is acyclic. Moreover, all the data-driven approaches to ORP or CORP of linear systems  boils down to solving a sequence of linear algebraic equations governing a set of unknown variables. Thus, the computational cost and the solvability conditions depend on the complexity of these equations. 
    For this reason, we will further make two main improvements over the existing results. First,
     due to the use of the virtual tracking error,  the closed-loop systems of  various followers in \cite{gao2021reinforcement} are coupled. Thus, the learning process in \cite{gao2021reinforcement} is quite complicated, and moreover, it yields a set of high dimensional linear algebraic equations. 
     In contrast, by normalizing the virtual error and making some additional manipulations, we fully decouple the closed-loop systems  of various followers, and thus lead to a set of linear algebraic equations with much lower dimension.
     We further show that the unknown variables of these linear equations can be decomposed into two groups and thus 
     the original derived sequence of linear equations can be reduced to two sets  of lower-dimensional linear algebraic equations. 
     As a result of these two improvements, our approach not only drastically reduces the computational cost, but also significantly weakens the solvability conditions in terms of the  full column rank requirements for these equations.

     The rest of this paper is organized as follows. Section \ref{sec2} provides preliminaries for the CORP based on the distributed internal model method. {\color{black}Section \ref{sec3} presents a novel PI-based approach and a novel VI-based approach for solving the CORP.} Section \ref{sec4} further presents an improved PI-based algorithm and an improved VI-based approach.  Section \ref{sec6} closes the paper with some concluding remarks.

    \indent \textbf{Notation} Throughout this paper, $\mathbb{R}, \mathbb{N} , \mathbb{N}_+$ and $ \mathbb{C}_-$ represent the sets of real numbers, nonnegative integers,  positive integers and the open left-half complex plane, respectively. $\mathcal{P}^n$ is the set of all $n\times n$ real, symmetric and positive semidefinite matrices. $|\cdot|$ represents the Euclidean norm for vectors and the induced norm for matrices. $\otimes$ denotes the Kronecker product.  For $b=[b_1, b_2, \cdots, b_n]^T\in \mathbb{R}^n$, $\text{vecv}(b)=[b_1^2,b_1b_2,\cdots,b_1b_n,b_2^2,b_2b_3,\cdots, b_{n-1}b_n,b_n^2]^T \in \mathbb{R}^{\frac{n(n+1)}{2}}$. For a symmetric matrix $P=[p_{ij}]_{n\times n}\in \mathbb{R}^{n\times n}$, $\text{vecs}(P)=[p_{11},2p_{12},\cdots,2p_{1n},p_{22},2p_{23},\cdots, 2p_{n-1,n}, p_{nn}]^T\in \mathbb{R}^{\frac{n(n+1)}{2}}$. For  $v\in \mathbb{R}^n$, $|v|_P=v^TPv$. For column vectors $a_i, i=1,\cdots,s$,  $\mbox{col} (a_1,\cdots,a_s )= [a_1^T,\cdots,a_s^T  ]^T,$ and, if  $A = (a_1,\cdots,a_s )$, then  vec$(A)=\mbox{col} (a_1,\cdots,a_s )$.
    For $A\in \mathbb{R}^{n\times n}$, $\sigma(A) $ denotes the spectrum of $A$. `blockdiag' denotes the block diagonal matrix operator. $I_n $ denotes the identity matrix of dimension $n$.

    \section{Preliminary} \label{sec2}
    In this section, we review some results on the cooperative output regulation problem  via the internal model approach based on \cite{su2012general} \cite{cai2022cooperative}.

    Consider a class of linear MASs in the following form.
    \begin{align}
    \begin{split}\label{follower}
    	\dot{x}_i&=Ax_i+Bu_i+E_iv\\
    	y_i&=Cx_i
    \end{split}
    \end{align}
    where,  for $i=1,2,\cdots,N$,  $x_i\in \mathbb{R}^n, y_i \in \mathbb{R}^p$ and $u_i\in \mathbb{R}^m$ are the state, measurement output and control input of the $i^{th}$ agent, and $v\in \mathbb{R}^q$ is the state of the exosystem as follows.
    \begin{align}\label{leader}
    	\dot{v}=Sv
    \end{align}
    We define the tracking error of the $i^{th}$ subsystem as follows.
    \begin{align} \label{error}
    	e_i=y_i-y_0=C x_i+Fv
    \end{align}
    where $y_0=-Fv$ is the reference signal.

    Like in \cite{su2012general},   we treat the exosystem \eqref{leader}  and the plant \eqref{follower} as  a leader-follower multi-agent system with $N+1$ agents where \eqref{leader} is the leader and the $N$ subsystems of \eqref{follower} are $N$ followers.  To describe the communication constraints of this leader-follower system, we define a digraph $\mathcal{G}=\{\mathcal{V}, \mathcal{E} \}$ where $\mathcal{V}=\{0,1,\cdots, N \}$ is the node set and $\mathcal{E} \subset \mathcal{V} \times \mathcal{V}$ is the edge set.
    The node $0$ denotes the leader/exosystem \eqref{leader} and the remaining $N$ nodes denote the $N$ followers of (\ref{follower}).
    For $i,j = 0, 1, \cdots,N$, $i \neq j$,  an edge from node $j$ to node $i$ is denoted by $(j, i)$, {\color{black} where the nodes $j$ and $i$ are called the parent node and the child node of each other.}
    {\color{black} A directed spanning tree is a digraph in which every node has exactly one parent
    	except for one node, called the root, which has no parent and from
    	which every other node is reachable. }
    Let $\mathcal{N}_i = \{j | (j,i)\in \mathcal{E}\}$ which is called the neighbor set of node $i$, and let $\mathcal{N}_i^- = \mathcal{N}_i/\{0\}$. Let $\mathcal{A}=[a_{ij}]\in \mathbb{R} ^{(N+1) \times (N+1) }$ be such that $a_{ij}>0$ if $(j,i)\in \mathcal{E} $ and otherwise $a_{ij}=0$, which is called
    the adjacency matrix of $\mathcal{G}$.
    We assume $a_{0j}=0$ for $j=1,2,\cdots,N$  since there are no edges from any follower to the leader.
	$H=[h_{ij}]\in \mathbb{R}^{N\times N}, i=1,2,\cdots,N, j=1,2,\cdots,N$ is defined by $h_{ii}=\sum_{j=0}^{N} a_{ij}$ and $h_{ij}=-a_{ij}$ for all $i\neq j$.
	
	Like in \cite{su2012general}, we describe the cooperative output regulation problem (CORP) as follows.
	\begin{problem}\label{p1}
		Design a distributed controller $u_i$ such that the closed-loop system is exponentially stable with $v$ set to zero and, for $i=1,\cdots,N$, $\lim\limits_{t\to \infty}e_i(t)=0$.
	\end{problem}
	
	Some standard assumptions needed for solving Problem \ref{p1}  are listed as follows.
%	\begin{assmp}\label{ass1}
%		 There exist matrices $A,B,C$ such that, for $i=1,2,\cdots,N$,  $A_i=A,B_i=B$ and $C_i=C$.
%	\end{assmp}
	
	\begin{assmp}\label{ass2}
		The pair $(A,B)$ is stabilizable.
	\end{assmp}
	
	\begin{assmp}\label{ass3}
		All the eigenvalues of $S$ have nonnegative real parts.
	\end{assmp}
	
	\begin{assmp}\label{ass4}
		For all $\lambda \in \sigma(S)$, $\textup{rank}(\begin{bmatrix}
			A-\lambda I & B\\C & 0
		\end{bmatrix})=n+p$.
	\end{assmp}
	\begin{assmp}\label{ass5}
		The digraph $\mathcal{G}$ contains a directed spanning tree with the node 0 as its root.
	\end{assmp}

	In order to make use of the internal model principle,  we present the concept of minimal $p-copy$ internal model of the  matrix $S$ as follows.
	\begin{defin} \color{black}
		A pair of matrices $(G_1,G_2)$ is said to incorporate the minimum $p-\textup{copy}$ internal model of the matrix $S$ if
		\begin{align}\label{internalmodel}
			G_1=\textup{blockdiag}\underbrace{[\beta,\cdots,\beta]}_{p-tuple}, \; G_2=\textup{blockdiag}\underbrace{[\sigma,\cdots,\sigma]}_{p-tuple}
		\end{align}
		where  $\beta$ is a constant square matrix whose characteristic polynomial equals the minimal polynomial of $S$, and $\sigma$ is a constant column vector such that $(\beta, \sigma)$ is controllable.
	\end{defin}

	To describe our distributed control law, we introduce the virtual tracking error $e_{vi}(t)$ for the $i^{th}$ subsystem as follows  \cite{su2012general}.
\begin{align} \label{eqevi}
		e_{vi}=\sum_{j\in \mathcal{N}_i} a_{ij}(y_i-y_j)
	\end{align}
	
Then,  the distributed dynamic state feedback control law proposed in \cite{su2012general} is as follows.
	\begin{align}
		\begin{split} \label{controller}
			u_i=&K_{x}(\sum_{j\in \mathcal{N}_i^-}a_{ij}(x_i-x_j)+a_{i0}x_i)+K_{z}z_i \\
			\dot{z}_i&=G_1z_i+G_2e_{vi}\\
\end{split}
	\end{align}
	where $K_{x},K_{z}$ are some constant matrices to be designed, and $z_i \in \mathbb{R}^{n_z}$ for some integer $n_z$.

The solvability of Problem \ref{p1} was first studied in \cite{su2012general} and further refined in \cite{cai2022cooperative}. The following result taken from \cite{su2012general} summarizes
the solvability of Problem \ref{p1}.

\begin{lem}
    	 Under Assumptions  \ref{ass2}-\ref{ass5}, there exists a control gain
     $(K_x,K_z)$  such that the following matrix
    	\begin{align}\label{ac}
    		A_c\triangleq \begin{bmatrix}
    			I_N\otimes A+H\otimes (BK_x) & I_N\otimes (BK_z) \\
    			H\otimes (G_2C) & I_N\otimes G_1
    		\end{bmatrix}
    	\end{align}
  is Hurwitz, and moreover, any  control gain $(K_x,K_z)$  that makes  $A_c$ Hurwitz is such that \eqref{controller} solves Problem \ref{p1}.
   \end{lem}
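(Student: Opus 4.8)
The plan is to decouple the large closed-loop matrix $A_c$ into $N$ elementary blocks indexed by the eigenvalues of $H$, establish the stabilizability of an augmented pair, and then close the argument with the internal model principle. Under Assumption \ref{ass5}, a standard graph-theoretic fact guarantees that every eigenvalue $\lambda_i$ of $H$ has strictly positive real part, so $H$ is nonsingular. I would pick a nonsingular $T$ putting $H$ into upper-triangular form $T^{-1}HT=\Lambda$ with the $\lambda_i$ on the diagonal, and apply the similarity transformation $\mathrm{blockdiag}(T\otimes I_n,\,T\otimes I_{n_z})$ to $A_c$. By the mixed-product property of the Kronecker product the factors $I_N\otimes(\cdot)$ are left unchanged while each $H\otimes(\cdot)$ becomes $\Lambda\otimes(\cdot)$; after reordering the state so that the two components attached to each $\lambda_i$ are grouped together, the result is block upper-triangular with diagonal blocks
\begin{align}
M_i=\begin{bmatrix} A+\lambda_i BK_x & BK_z\\ \lambda_i G_2C & G_1\end{bmatrix},\quad i=1,\dots,N.
\end{align}
A further coordinate change by $\mathrm{diag}(\lambda_i I_n, I_{n_z})$ then shows $M_i$ is similar to $\bar A+\lambda_i\bar B K$, where $\bar A=\begin{bmatrix}A&0\\ G_2C&G_1\end{bmatrix}$, $\bar B=\begin{bmatrix}B\\0\end{bmatrix}$ and $K=[K_x,\,K_z]$. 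Consequently $A_c$ is Hurwitz if and only if $\bar A+\lambda_i\bar B K$ is Hurwitz for every $i$.

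For the existence claim I would first prove that $(\bar A,\bar B)$ is stabilizable, using the PBH test: for $\mathrm{Re}(\mu)\ge 0$ with $\mu\notin\sigma(G_1)$ the claim reduces to the stabilizability of $(A,B)$ (Assumption \ref{ass2}), while for $\mu\in\sigma(G_1)=\sigma(S)$ it follows from the transmission-zero condition of Assumption \ref{ass4} combined with the controllability of $(\beta,\sigma)$ and the minimal-polynomial structure built into the $p$-copy internal model. Given stabilizability, I would take a positive definite $P$ solving the algebraic Riccati equation $\bar A^TP+P\bar A-P\bar B\bar B^TP+I=0$ and set $K=-\rho\bar B^TP$. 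A Lyapunov computation gives, for each $i$, that the Hermitian matrix $(\bar A+\lambda_i\bar BK)^{*}P+P(\bar A+\lambda_i\bar BK)$ equals $-I+(1-2\rho\,\mathrm{Re}(\lambda_i))P\bar B\bar B^TP$, which is negative definite once $\rho\ge 1/(2\min_i\mathrm{Re}(\lambda_i))$; hence all $M_i$, and therefore $A_c$, are Hurwitz. I expect the stabilizability step at $\mu\in\sigma(G_1)$ to be the main obstacle, since it is precisely here that Assumption \ref{ass4} and the internal-model structure must be combined, whereas the common-gain step is routine once $\mathrm{Re}(\lambda_i)>0$ is in hand.

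For the ``moreover'' part, $A_c$ being Hurwitz already yields exponential stability of the closed loop with $v=0$, the first requirement of Problem \ref{p1}. To obtain $e_i(t)\to 0$, I would append the exosystem and write the full dynamics as $\dot X=A_cX+\bar Ev$, $\dot v=Sv$, with $X$ the stacked augmented state; since $\sigma(A_c)\subset\mathbb{C}_-$ is disjoint from $\sigma(S)$ by Assumption \ref{ass3}, the Sylvester equation $X_sS=A_cX_s+\bar E$ has a unique solution and $X-X_sv\to 0$ exponentially. The internal model principle then finishes the argument: at steady state the internal-model dynamics, whose matrix $G_1$ reproduces the minimal polynomial of $S$ with $(\beta,\sigma)$ controllable, force the virtual errors $e_{vi}$ to vanish; because $e_v=(H\otimes I_p)e$ and $H$ is nonsingular, the vanishing of all $e_{vi}$ propagates to $e_i(t)\to 0$ for every $i$.
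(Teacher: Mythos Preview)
Your proposal is correct and follows essentially the standard route that the paper relies on. Note that the paper does not actually prove this lemma; it is quoted from \cite{su2012general}, with the construction of the stabilizing gain deferred to Remark~\ref{rem2} (stabilizability of $(Y,J)$ via Lemma~1.26 of \cite{huang2004}, then the Riccati-based common gain via Lemma~9.3 of \cite{cai2022cooperative}). Your block-triangularization of $A_c$ through the Schur form of $H$, the similarity $T_iM_iT_i^{-1}=\bar A+\lambda_i\bar BK$ with $T_i=\mathrm{diag}(\lambda_i I_n,I_{n_z})$, and the Lyapunov inequality yielding $\rho\ge 1/(2\min_i\mathrm{Re}(\lambda_i))$ reproduce exactly that argument (with $\rho=\nu^{-1}$ in the paper's notation). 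For the ``moreover'' part, your Sylvester/internal-model reasoning and the step $e_v=(H\otimes I_p)e$ with $H$ nonsingular mirror the proof the paper gives for the closely related Lemma~\ref{lem2}; the only point you leave slightly implicit---that the $p$-copy internal model forces the steady-state virtual error to vanish---is precisely Lemma~1.27 of \cite{huang2004}, which the paper invokes there.
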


\begin{rem}\label{rem2}
Let $Y\triangleq\begin{bmatrix}
		A&0\\G_2C &G_1
	\end{bmatrix}, J\triangleq\begin{bmatrix}
		B\\0
	\end{bmatrix}$. Under Assumption \ref{ass2}-\ref{ass4}, by Lemma 1.26 in \cite{huang2004},  $(Y,J)$ is stabilizable. Thus, the Riccati Equation
	\begin{align}\label{Ricca}
		Y^TP+PY-PJJ^TP+I_{n+n_z}=0
	\end{align}
	admits a unique positive definite solution $P^*$.
	 By  Lemma 9.3 in \cite{cai2022cooperative}, any control gain	 \begin{align}\label{gain}
	 	[K_x,K_z]\triangleq K=-\nu^{-1} J^TP^*
	 \end{align}
	 with $0<\nu\leq  2\text{Re}(\lambda_i), \forall \lambda_i\in \sigma(H), i=1,\cdots,N$, makes  $A_c$ Hurwitz.
	 \end{rem}

	We now summarize the main result of  \cite{su2012general} as follows.
	 \begin{thm}
	 		 Under Assumptions \ref{ass2}-\ref{ass5}, let  $(G_1,G_2)$ incorporate a minimum $p-\textup{copy}$ internal model of the matrix $S$.
Then Problem \ref{p1} is solved  by the distributed dynamic feedback control law \eqref{controller} with the feedback control gain \eqref{gain}.
	 \end{thm}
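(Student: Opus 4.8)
\emph{Proof proposal.} The plan is to derive the theorem as a direct synthesis of Lemma 1 and Remark \ref{rem2}: between them they already reduce the solvability of Problem \ref{p1} to the Hurwitz property of the matrix $A_c$ in \eqref{ac} and then supply an explicit gain achieving it. Thus the only genuinely new point to check is that the scalar $\nu$ appearing in \eqref{gain} can be chosen strictly positive, which rests on a spectral property of $H$.

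First I would show that, under Assumption \ref{ass5}, every eigenvalue of $H$ has strictly positive real part. Since $\mathcal{G}$ contains a directed spanning tree rooted at node $0$, each follower is reachable from the leader, and $H$ is exactly the principal submatrix of the Laplacian of $\mathcal{G}$ associated with the $N$ followers. A standard argument (Gershgorin's disk theorem combined with the reachability guaranteed by the spanning tree, as in \cite{su2012general}\cite{cai2022cooperative}) then yields that $H$ is nonsingular with $\sigma(H)$ contained in the open right-half plane. Hence $\min_{\lambda_i \in \sigma(H)} \text{Re}(\lambda_i) > 0$, so the range $0 < \nu \le 2\text{Re}(\lambda_i)$ required in \eqref{gain} is nonempty and a valid $\nu$ exists.

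Next I would invoke Remark \ref{rem2} with the $\nu$ fixed above. Because $(G_1,G_2)$ incorporates a minimum $p$-copy internal model of $S$, Assumption \ref{ass4} guarantees via Lemma 1.26 of \cite{huang2004} that the augmented pair $(Y,J)$ is stabilizable; consequently the Riccati equation \eqref{Ricca} has a unique positive definite solution $P^*$, and by Lemma 9.3 of \cite{cai2022cooperative} the gain $[K_x,K_z]=-\nu^{-1}J^TP^*$ renders $A_c$ Hurwitz. Finally, Lemma 1 states that any gain making $A_c$ Hurwitz is such that \eqref{controller} solves Problem \ref{p1}; applying this to the present gain finishes the argument.

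Since most of the work is pre-packaged in Lemma 1 and the cited auxiliary results, the assembling itself is routine, and I expect the main obstacle to be conceptual rather than computational: one must ensure that a single choice of the pair $(G_1,G_2)$ and of the gain simultaneously meets the internal-model requirement feeding Lemma 1 and the stabilizability requirement feeding Remark \ref{rem2}. I would therefore make explicit that the $p$-copy internal model is precisely the structure that renders $(Y,J)$ stabilizable under Assumption \ref{ass4}, so that no conflicting demands arise, and that the spectral claim on $H$ is the sole step needing a genuine (if standard) proof.
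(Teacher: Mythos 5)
Your proposal is correct and takes essentially the same route as the paper: Theorem 1 is presented there as an immediate consequence of Lemma 1 (any gain rendering $A_c$ Hurwitz solves Problem \ref{p1}) combined with Remark \ref{rem2} (the Riccati-based gain \eqref{gain} renders $A_c$ Hurwitz), which is precisely your assembly. Your explicit verification that $\sigma(H)$ lies in the open right half-plane, so that a valid $\nu$ exists, is the same fact the paper invokes via Lemma 4 of \cite{hu2007} in its other proofs, so no genuinely new ingredient is introduced.
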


\begin{rem}
		Two significant differences between the method of \cite{gao2021reinforcement} and the method here are worth mentioning.
First, the internal model used in \cite{gao2021reinforcement} is 1-copy which is a special case of that used here with $G_2=[0,0,\cdots,0,1]^T$ be a column vector. Thus,  every subsystem of the follower system
in  \cite{gao2021reinforcement} must be SISO. In contrast, each subsystem of the follower system
here can be MIMO. Second, the approach of \cite{gao2021reinforcement} only applies to the case where the matrix  $A_c$ in  equation (16) of \cite{gao2021reinforcement} is lower triangular, which is possible only if the communication graph is  acyclic.
In contrast, Assumption \ref{ass5} here accommodates  graphs with cycles.
\end{rem}

  \section{Adaptive Cooperative Output Regulation with Normalized Virtual Tracking Error} \label{sec3}

     In this section, we further consider solving Problem \ref{p1} without knowing
      the matrices $A,B,C,E_i$ and $F$ by data-driven approaches.  The key for solving this problem is to obtain the solution of the Riccati equation \eqref{Ricca} without knowing
      the matrices $A,B,C,E_i$ and $F$.
      %A natural thought is to apply the off-policy RL/ADP-based control method such as \cite{jiang2012computational,bian2016,gao2016adaptive,jiang2022value,gao2021reinforcement}.
%     In the literature,  the off-policy RL/ADP-based control usually includes two phases.
%     The first phase is learning, which includes data collection and controller generation.
%     The second phase is control, within which the controller learned in the first phase is applied to the system to achieve the control task.
    For this purpose, we need to relate the system's dynamics to the Riccati equation to be solved.
     Then, with the virtual error $e_{vi}$ given by \eqref{eqevi}, we can obtain the following so-called augmented system:

	\begin{align} \label{sysdy}
		\begin{split}
			\dot{x}_i&=Ax_i+Bu_i+E_iv\\
			\dot{z}_i&=G_1z_i+G_2 \sum_{j\in \mathcal{N}_i}a_{ij}(y_i-y_j)
			\end{split}
\end{align}
Let $\xi_i\triangleq \col (x_i,z_i)$.  Then
     $\xi_i$ is governed by the following differential equation
     \begin{align}\label{dyxi}
     	\dot{\xi}_i= & \hat{Y} \xi_i+Ju_i+\begin{bmatrix}
     		E_i\\0
     	\end{bmatrix}v+\gamma_i
     \end{align}
     where $ \hat{Y}  = \begin{bmatrix}
     		A&0\\ (\sum_{j\in \mathcal{N}_i} a_{ij}) G_2C & G_1
     	\end{bmatrix}$ and   $\gamma_i=-\begin{bmatrix}
     	0\\ \sum_{j\in \mathcal{N}_i}( a_{ij} G_2y_j)
     \end{bmatrix}$. Since $\hat{Y}  \neq Y$,  we cannot relate \eqref{dyxi} to the Riccati equation \eqref{Ricca}.
     To fix this problem, let
      \begin{align}\label{eqhevi}
      \hat{e}_{vi}=\frac{e_{vi}}{\sum_{j\in \mathcal{N}_i} a_{ij}}
    \end{align}
  which is    the normalized version of  $e_{vi}$. Then we obtain the following modified augmented system
    \begin{align} \label{sysdyhat}
     	\begin{split}
     		\dot{x}_i&=Ax_i+B{u}_i+E_iv\\
     		\dot{\hat{z}}_i&=G_1\hat{z}_i+G_2\hat{e}_{vi}\\
     		\hat{e}_{vi}&=\frac{\sum_{j\in \mathcal{N}_i}a_{ij}(y_i-y_j)}{\sum_{j\in \mathcal{N}_i} a_{ij}}
     	\end{split}
     \end{align}
     and the modified distributed control law:
     \begin{align}
     	\begin{split} \label{modicontroller}
     		{u}_i=&\frac{K_{x}}{\sum_{j\in \mathcal{N}_i} a_{ij}}(\sum_{j\in \mathcal{N}_i^-}a_{ij}(x_i-x_j)+a_{i0}x_i)+K_{z}\hat{z}_i
     	\end{split}
     \end{align}

     Let $x=\col (x_1,\cdots,x_N), u=\col (u_1,\cdots,u_N),  \hat{e}_v=\col (\hat{e}_{v1} ,\cdots,\hat{e}_{vN} ) , \hat{z}=\col (\hat{z}_1 ,\cdots,\hat{z}_N ), \tilde{A}=I_N\otimes A, \tilde{B}=I_N\otimes B, \tilde{E}=\col (E_1 ,\cdots, E_N), \tilde{G}_1=I_N\otimes G_1, \tilde{G}_2=I_N\otimes G_2, \hat{C}=(DH)\otimes C, \hat{F}=(DH\bold{1}_N)\otimes F$, and $D=\text{blockdiag}(\frac{1}{\sum_{j\in \mathcal{N}_1} a_{1j}}, \frac{1}{\sum_{j\in \mathcal{N}_2} a_{2j}}, \cdots, \frac{1}{\sum_{j\in \mathcal{N}_N} a_{Nj}})$.
	Then, the compact form of \eqref{sysdyhat} is given by
     \begin{align}\label{sysdycomhat}
     	\begin{split}
     		\dot{x}&=\tilde{A}x+\tilde{B}{u}+\tilde{E}v\\
     	\dot{\hat{z}}&=\tilde{G}_1\hat{z}+\tilde{G}_2\hat{e}_v\\
     	\hat{e}_v&=\hat{C}x+\hat{F}v
     	\end{split}
     \end{align}

     We have the following lemma regarding the solvability of the cooperative output regulation problem of \eqref{sysdycomhat} by control law \eqref{modicontroller}.

     \begin{lem} \label{lem2}
     	Under Assumptions \ref{ass2}-\ref{ass5},
      the cooperative output regulation problem of \eqref{sysdycomhat} is solved by the distributed controller \eqref{modicontroller} if $(K_x,K_z)$ is such that the following matrix $\hat{A}_c$ is Hurwitz.
     	\begin{align}\label{hatac}
     		\hat{A}_c\triangleq \begin{bmatrix}
     			I_N\otimes A+(DH)\otimes (BK_x) & I_N\otimes (BK_z) \\
     			(DH)\otimes (G_2C) & I_N\otimes G_1
     		\end{bmatrix}
     	\end{align}
     \end{lem}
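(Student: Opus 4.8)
The plan is to substitute the controller \eqref{modicontroller} into \eqref{sysdycomhat}, read off the closed-loop matrix, and then run a standard regulator-equation argument in which the internal model structure of $(G_1,G_2)$ forces the steady-state error to vanish. First I would write \eqref{modicontroller} in compact form. Observing that $\sum_{j\in\mathcal{N}_i^-}a_{ij}(x_i-x_j)+a_{i0}x_i=\sum_{j=1}^N h_{ij}x_j$, the control law becomes $u=(DH\otimes K_x)x+(I_N\otimes K_z)\hat{z}$. Substituting this together with $\hat{e}_v=\hat{C}x+\hat{F}v$ into \eqref{sysdycomhat} and setting $\chi\triangleq\col(x,\hat{z})$ yields the closed loop $\dot{\chi}=\hat{A}_c\chi+\hat{B}_v v$, where the system matrix is exactly $\hat{A}_c$ of \eqref{hatac} (using $(I_N\otimes B)(DH\otimes K_x)=(DH)\otimes(BK_x)$, $(I_N\otimes G_2)\hat{C}=(DH)\otimes(G_2C)$, etc.) and $\hat{B}_v\triangleq\col(\tilde{E},(DH\mathbf{1}_N)\otimes(G_2F))$. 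The tracking error in these coordinates is $e=(I_N\otimes C)x+(\mathbf{1}_N\otimes F)v$.

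For the stability requirement, setting $v=0$ gives $\dot{\chi}=\hat{A}_c\chi$, which is exponentially stable precisely because $\hat{A}_c$ is assumed Hurwitz. For the tracking requirement I would invoke the usual output-regulation argument: by Assumption \ref{ass3} the eigenvalues of $S$ have nonnegative real parts, so $\sigma(S)\cap\sigma(\hat{A}_c)=\emptyset$ and the Sylvester equation $X_\chi S-\hat{A}_c X_\chi=\hat{B}_v$ has a unique solution $X_\chi\triangleq\col(X,Z)$. Then $\tilde{\chi}\triangleq\chi-X_\chi v$ obeys $\dot{\tilde{\chi}}=\hat{A}_c\tilde{\chi}$ and decays to zero, so $e(t)\to U v(t)$ with $U\triangleq(I_N\otimes C)X+\mathbf{1}_N\otimes F$. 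It then remains to prove $U=0$.

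The crux is the second block row of the Sylvester equation. Writing $\hat{C}=(DH\otimes I_p)(I_N\otimes C)$ and $\hat{F}=(DH\mathbf{1}_N)\otimes F=(DH\otimes I_p)(\mathbf{1}_N\otimes F)$, I would substitute $(I_N\otimes C)X=U-\mathbf{1}_N\otimes F$ into that row; the two forcing terms proportional to $F$ cancel, leaving $ZS-(I_N\otimes G_1)Z=(I_N\otimes G_2)(DH\otimes I_p)U$. Under Assumption \ref{ass5} the matrix $H$ is nonsingular and every in-degree weight $\sum_{j\in\mathcal{N}_i}a_{ij}$ is positive, so $D$ is a well-defined positive diagonal matrix and $DH\otimes I_p$ is invertible; hence $U=0$ if and only if $\bar{U}\triangleq(DH\otimes I_p)U=0$. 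Since $(G_1,G_2)$ incorporates a minimum $p$-copy internal model of $S$, the pair $(I_N\otimes G_1,I_N\otimes G_2)$ incorporates an $Np$-copy internal model of $S$, and because $\sigma(I_N\otimes G_1)=\sigma(S)$ the Sylvester operator $Z\mapsto ZS-(I_N\otimes G_1)Z$ is singular. The internal model property---guaranteed by the controllability of $(\beta,\sigma)$ together with $\det(\lambda I-\beta)$ equal to the minimal polynomial of $S$ (see \cite{huang2004}\cite{cai2022cooperative})---ensures that $(I_N\otimes G_2)\bar{U}$ can lie in the range of this operator only if $\bar{U}=0$. Since $Z$ solves the equation, $\bar{U}=0$, hence $U=0$ and $e(t)\to0$.

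The main obstacle is this last step: making the internal model principle rigorous, i.e.\ showing that solvability of $ZS-(I_N\otimes G_1)Z=(I_N\otimes G_2)\bar{U}$ in $Z$ forces $\bar{U}=0$. This is exactly where the $p$-copy structure ($\beta$ carrying the minimal polynomial of $S$ and $(\beta,\sigma)$ controllable) is indispensable, and where one must be careful that the eigenvalue overlap $\sigma(G_1)=\sigma(S)$ is handled through the range condition rather than through invertibility of the Sylvester operator. The remaining ingredients---the closed-loop algebra, the uniqueness of $X_\chi$, and the nonsingularity of $DH$ from the spanning-tree assumption---are routine.
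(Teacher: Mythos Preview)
Your proof is correct and follows essentially the same route as the paper's: both reduce to showing that the steady-state virtual error $\hat{C}X+\hat{F}$ (equivalently $(DH\otimes I_p)U$ in your notation) vanishes by the internal-model property of $(G_1,G_2)$, and then invoke the invertibility of $DH$ under Assumption~\ref{ass5} to pass from $\hat{e}_v\to 0$ to $e\to 0$. The only difference is packaging---the paper cites Lemma~1.27 of \cite{huang2004} as a black box for the internal-model step, whereas you unpack that lemma explicitly via the Sylvester equation and the range argument for $Z\mapsto ZS-(I_N\otimes G_1)Z$.
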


     \begin{proof}
     	The compact form of  \eqref{modicontroller} is as follows:
     \begin{align}\label{hatu}
     	{u}=((DH)\otimes K_x)x+(I_N\otimes K_z)\hat{z}
     \end{align}
Substituting     \eqref{hatu}   to \eqref{sysdycomhat} gives the following closed-loop system:

 \begin{align}\label{closedcomhat}
     	\begin{split}
     			\dot{x}&=(\tilde{A}+\tilde{B}((DH)\otimes K_x) )x+\tilde{B}(I_N\otimes K_z)\hat{z }+\tilde{E}v\\
     	\dot{\hat{z}}&=\tilde{G}_2\hat{C}  x + \tilde{G}_1 \hat{z} + \tilde{G}_2\hat{F}v\\
     	\hat{e}_v&=\hat{C}x+\hat{F}v
     	\end{split}
     \end{align}

 Let \begin{align} \label{eqhatac}
     	\hat{A}_c= \begin{bmatrix}
     		\tilde{A}+\tilde{B}((DH)\otimes K_x) & \tilde{B}(I_N\otimes K_z) \\
     		\tilde{G}_2\hat{C} & \tilde{G}_1
     	\end{bmatrix}
     \end{align}
which is the system matrix of the closed-loop system \eqref{closedcomhat} and is the same as the matrix on the right hand side of \eqref{hatac}.
Suppose $(K_x,K_z)$ is such that $\hat{A}_c$ is Hurwitz. Since $(\tilde{G}_1,\tilde{G}_2)$ incorporates a minimum $Np-\textup{copy}$ internal model of the matrix $S$, by Lemma 1.27 of \cite{huang2004}, for any $\tilde{E}$ and $\hat{F}$, the following  matrix equations have a unique solution $(X,Z)$:
     \begin{align}
     	\begin{split}
     		XS&=\tilde{A}X+\tilde{B}[((DH)\otimes K_x)X+(I_N\otimes K_z)Z]+\tilde{E}\\
     		ZS&=\tilde{G}_1Z+\tilde{G}_2(\hat{C}X+\hat{F})\\
     		0&=\hat{C}X+\hat{F}
     	\end{split}
     \end{align}

      Let $\beta_x=x-Xv,\beta_z=\hat{z}-Zv$. Then it can be verified that
     \begin{align*}
     	\begin{bmatrix}
     		\dot{\beta}_x \\ \dot{\beta}_z
     	\end{bmatrix}&=\hat{A}_c\begin{bmatrix}
     	\beta_x \\ \beta_z
     	\end{bmatrix}\\
     	\hat{e}_v&=\hat{C}\beta_x
     \end{align*}
  Since $\hat{A}_c$ is Hurwitz,     $\lim\limits_{t\to \infty}\hat{e}_v=0$.  Note that $\hat{e}_v=(D H\otimes I_p)e$ with $e= \col (e_1,\cdots,e_N)$.
  It can be seen that, under Assumption \ref{ass5},  $D$ is nonsingular. Also, by Lemma 4 of \cite{hu2007},  under Assumption \ref{ass5},  all the eigenvalues of the matrix $H$ have positive real parts.    
  Thus,  we have $\lim_{t\to \infty}e=0$ if  $\lim_{t\to \infty}\hat{e}_v=0$.
 % Finally, note that ..
%         By definition, $\hat{e}_v\in \mathbb{R}^{Np}$, and $(\tilde{G}_1,\tilde{G}_2)$ incorporates a minimum $Np-\textup{copy}$ internal model of the matrix $S$.  $\hat{A}_c$ can be rewritten as follows:
%
%     \begin{align}
%     	\hat{A}_c= \begin{bmatrix}
%     		\tilde{A}+\tilde{B}((DH)\otimes K_x) & \tilde{B}(I_N\otimes K_z) \\
%     		\tilde{G}_2\hat{C} & \tilde{G}_1
%     	\end{bmatrix}
%     \end{align}
 \end{proof}

       We are now ready to present the following result.

       \begin{thm}\label{thm2}
       	 Under Assumptions \ref{ass2}-\ref{ass5}, Problem \ref{p1} is solved  by the distributed dynamic feedback control law \eqref{modicontroller} with the following feedback control gain:
       	\begin{align}\label{gainrevi}
       			[K_x,K_z]\triangleq K=-\omega^{-1} J^TP^*
       	\end{align}
       	where $P^*$ is the solution to the Riccati equation \eqref{Ricca} and  $0<\omega\leq 2\text{Re}(\lambda_i), \forall \lambda_i\in \sigma(DH), i=1,\cdots,N$.
       \end{thm}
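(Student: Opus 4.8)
The plan is to reduce the claim to the Hurwitzness of $\hat{A}_c$ and then treat $\hat{A}_c$ exactly as $A_c$ was treated in Remark \ref{rem2}, with the matrix $H$ replaced throughout by $DH$. By Lemma \ref{lem2}, the control law \eqref{modicontroller} solves the cooperative output regulation problem of \eqref{sysdycomhat}, and hence Problem \ref{p1}, whenever $(K_x,K_z)$ renders the matrix $\hat{A}_c$ of \eqref{hatac} Hurwitz. So it suffices to show that the gain \eqref{gainrevi} makes $\hat{A}_c$ Hurwitz. Comparing \eqref{hatac} with \eqref{ac}, the only structural change is that every occurrence of $H$ is replaced by $DH$; thus the whole argument behind \eqref{gain} carries over once the relevant spectral properties of $DH$ are established.

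The first key step is to show that every eigenvalue of $DH$ has positive real part, which is what makes the range $0<\omega\le 2\mathrm{Re}(\lambda_i)$ in \eqref{gainrevi} nonempty. Under Assumption \ref{ass5} each row sum $\sum_{j\in\mathcal N_i}a_{ij}$ is strictly positive, so $D$ is a well-defined positive diagonal matrix. The matrix $H$ has nonpositive off-diagonal entries by its definition and its spectrum lies in the open right-half plane by Lemma 4 of \cite{hu2007}, so $H$ is a nonsingular $M$-matrix and $H^{-1}\ge 0$ entrywise. Since $(DH)^{-1}=H^{-1}D^{-1}\ge 0$ and $DH$ inherits the nonpositive off-diagonal sign pattern from $H$, $DH$ is again a nonsingular $M$-matrix, and therefore $\sigma(DH)$ lies in the open right-half plane.

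Next I would decouple the coupling. Let $T$ be invertible with $T^{-1}(DH)T=\Lambda$ upper triangular, $\Lambda=[\lambda_{ij}]$, $\lambda_{ii}=\lambda_i\in\sigma(DH)$, and apply the similarity transformation $\mathcal T=\mathrm{blockdiag}(T\otimes I_n,\,T\otimes I_{n_z})$ to $\hat{A}_c$. Using the mixed-product rule for $\otimes$, the four blocks of $\mathcal T^{-1}\hat{A}_c\mathcal T$ become $I_N\otimes A+\Lambda\otimes(BK_x)$, $I_N\otimes(BK_z)$, $\Lambda\otimes(G_2C)$, and $I_N\otimes G_1$; regrouping the states agent-by-agent yields a block upper-triangular matrix with diagonal blocks
\begin{align*}
M_i=\begin{bmatrix} A+\lambda_i BK_x & BK_z\\ \lambda_i G_2C & G_1\end{bmatrix},\qquad i=1,\dots,N.
\end{align*}
Hence $\hat{A}_c$ is Hurwitz iff each $M_i$ is Hurwitz. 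Because $\mathrm{Re}(\lambda_i)>0$ gives $\lambda_i\neq 0$, the scaling $S_i=\mathrm{blockdiag}(I_n,\lambda_i I_{n_z})$ yields $S_i^{-1}M_iS_i=Y+\lambda_i JK$, with $Y,J$ as in Remark \ref{rem2}, so it remains to show $Y+\lambda_i JK=Y-\omega^{-1}\lambda_i JJ^TP^*$ is Hurwitz.

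For the final step I would use $P^*$ from \eqref{Ricca} as a (complex) Lyapunov certificate. A direct computation that substitutes $Y^TP^*+P^*Y=P^*JJ^TP^*-I$ gives
\begin{align*}
P^*(Y+\lambda_i JK)+(Y+\lambda_i JK)^*P^*=\Bigl(1-\frac{2\,\mathrm{Re}(\lambda_i)}{\omega}\Bigr)P^*JJ^TP^*-I.
\end{align*}
Since $0<\omega\le 2\mathrm{Re}(\lambda_i)$, the coefficient is nonpositive, so the right-hand side is $\le-I<0$; together with $P^*>0$ this forces $\sigma(Y+\lambda_i JK)\subset\mathbb C_-$, completing the argument. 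The main obstacle is really the bookkeeping in the middle steps — verifying the $M$-matrix property of $DH$ and propagating the two similarity transformations correctly through the Kronecker structure — whereas the concluding Lyapunov inequality is exactly the computation already underlying \eqref{gain}, now read off for each complex $\lambda_i\in\sigma(DH)$. Equivalently, one may simply invoke Lemma 9.3 of \cite{cai2022cooperative} with $DH$ in place of $H$, the admissibility of this substitution being precisely the spectral fact established in the second step.
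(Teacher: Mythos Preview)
Your proposal is correct and takes essentially the same approach as the paper: both reduce the claim, via Lemma~\ref{lem2}, to showing that $\hat{A}_c$ is Hurwitz, observe that this is exactly Lemma~9.3 of \cite{cai2022cooperative} with $H$ replaced by $DH$, and identify the positive-real-part spectrum of $DH$ as the only new ingredient needed. Your $M$-matrix argument for $\sigma(DH)\subset\{\mathrm{Re}\,\lambda>0\}$ is in fact more careful than the paper's, which simply asserts the conclusion from $D$ being positive diagonal, and your explicit unpacking of the triangularization and Lyapunov steps merely spells out what the cited Lemma~9.3 does.
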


       \begin{proof}
       	The Theorem has been established for the special case where $D$ is an identity matrix in Lemma 9.3 of \cite{cai2022cooperative}.
       Since the only property of the matrix $H$ used in the proof of Lemma 9.3 of \cite{cai2022cooperative} is that all the eigenvalues of the matrix $H$ have positive real parts which,  under Assumption \ref{ass5},  is guaranteed by Lemma 4 of \cite{hu2007}. Since, under Assumption \ref{ass5}, $D$ is a diagonal matrix with all diagonal elements positive,
        all the eigenvalues of the matrix $DH$ have positive real parts. Thus, the proof of Theorem \ref{thm2} is the same as that of Lemma 9.3 of \cite{cai2022cooperative} upon replacing $H$ by $DH$ everywhere.

      \end{proof}

     Let $\hat{\xi}_i\triangleq \col (x_i,\hat{z}_i)$. Then, under the control law \eqref{modicontroller},
     $\hat{\xi}_i$ is governed by the following differential equation
     \begin{align}\label{dyhatxi}
       	\dot{\hat{\xi}}_i= &Y\hat{\xi}_i+Ju_i+\begin{bmatrix}
       		E_i\\0
       	\end{bmatrix}v+\hat{\gamma}_i
       \end{align}
     where $\hat{\gamma}_i=-\begin{bmatrix}
     	0\\ \frac{\sum_{j\in \mathcal{N}_i} (a_{ij} G_2y_j)}{\sum_{j\in \mathcal{N}_i} a_{ij}}
     \end{bmatrix}$ .

       Based on \eqref{dyhatxi}, we can apply the off-policy RL/ADP methods to approximately solve the Riccati equation \eqref{Ricca} and obtain the controller without the information of the system matrices provided that the data of $\hat{\xi}_i,u_i,v$ and $\hat{\gamma}_i$ are available. However, due to the communication constraints, some followers cannot directly obtain the information of the exosystem's state $v$, i.e., agent $i$ can access the signal $v$ if and only if $0\in \mathcal{N}_i$. To circumvent this difficulty,  we will introduce the following distributed observer proposed in \cite{su2011} to estimate $v$:
     \begin{align}\label{observer}
     	\dot{\eta}_i=S\eta_i+\mu\sum_{j\in \mathcal{N}_i}a_{ij}(\eta_j-\eta_i),~~i = 1, \cdots, N
     \end{align}
     where $\eta_i \in \mathbb{R}^q$ for $i = 0, 1, \cdots, N$ with $\eta_0 \triangleq v$, and $\mu>\mu_0>0 $ with $\mu_0$ determined by $S$ and digraph $\mathcal{G}$. By Theorem 1 of \cite{su2011}, $(\eta_i-v)$ tends to 0 exponentially.

\begin{rem}
	{\color{black}
		Instead of \eqref{observer}, reference \cite{gao2021reinforcement} made use of the following adaptive distributed observer proposed in \cite{cai2017} to estimate the state $v$ of the exosystem:
	\begin{align}\label{adaobserver}
		\begin{split}
		\dot{S}_i=&\mu_1\sum_{j\in \mathcal{N}_i}a_{ij}(S_j-S_i)\\
		\dot{\eta}_i=&S_i\eta_i+\mu_2\sum_{j\in \mathcal{N}_i}a_{ij}(\eta_j-\eta_i),~~i = 1, \cdots, N
	\end{split}
	\end{align}
	where $S_0 := S$ and $\mu_1,\mu_2>0$ are some positive constants. It was shown in \cite{cai2017} that,  there exist two real positive numbers $\mu_{10}$ and $\mu_{20}$ determined by the eigenvalues of $S$ and $H$  such that, for all $\mu_{1} > \mu_{10}$ and $\mu_{2} > \mu_{20}$, the solutions  of \eqref{adaobserver} are such that, for $i = 1, \cdots, N$, $S_i$ tend to $S$ and $\eta_i$ tend to $\eta$, exponentially. Thus, an advantage of \eqref{adaobserver} over \eqref{observer} is that it only requires the children of the leader be able to access the matrix $S$. Nevertheless, this advantage is paid by more computational cost of \eqref{adaobserver} over \eqref{observer}.
In this paper, for better elucidating the advantage of our approach, we choose to use  \eqref{observer} instead of \eqref{adaobserver}.
	}
\end{rem}

Now rewrite \eqref{dyhatxi} to the following form:
     \begin{align}\label{dyhatxirhox}
     	\dot{\hat{\xi}}_i= &Y\hat{\xi}_i+Ju_i+\begin{bmatrix}
     		E_i\\0
     	\end{bmatrix}\eta_i+\rho_i+\hat{\gamma}_i
     \end{align}
     where $\rho_i=\begin{bmatrix}
     	E_i\\0
     \end{bmatrix}(v-\eta_i)$ tends to 0 exponentially.

  \begin{rem} \label{rem5}
Two points are worth mentioning. First, $\hat{\gamma}_i $ satisfies the communication constraints and is thus accessible by agent $i$ through the communication network.
Second,  since $\rho_i$ tends to 0 exponentially, we can neglect the term  $\rho_i$ in \eqref{dyhatxirhox} by starting the data collecting process after some finite time $t_0>0$. 
     \end{rem}
     
     For $t\geq t_0$, we can simplify \eqref{dyhatxirhox} to the following form:
     \begin{align}\label{dyhatxirho}
     	\dot{\hat{\xi}}_i= &Y\hat{\xi}_i+Ju_i+\begin{bmatrix}
     		E_i\\0
     	\end{bmatrix}\eta_i+\hat{\gamma}_i,~~ t \geq t_0
     \end{align}

   In what follows, we will  develop both PI-based and VI-based approaches  to solve Problem \ref{p1} based on \eqref{dyhatxirho}.

      \subsection{PI-based Approach}\label{sec3-1}

     Motivated by \cite{jiang2012computational}, we apply the following iteration method \cite{Kleinman}
     to solve the Riccati equation \eqref{Ricca}
     \begin{align}\label{iteraeq1}
     	0&=(Y+JK_k)^TP_k+P_k(Y+JK_k)+I_{n+n_z}+K_k^TK_k\\ \label{iteraeq2}
     	K_{k+1}&=-J^TP_k
     \end{align}
     where $k=0,1,\cdots$, and $K_0$ is such that $Y+JK_0$ is Hurwitz.
     \eqref{iteraeq1} and \eqref{iteraeq2} guarantees the following property for $k\in \mathbb{N}$:
     \begin{enumerate}
     	\item $\sigma(Y+JK_k)\in \mathbb{C}_-$;
     	\item $P^*\leq P_{k+1}\leq P_k$;
     	\item $\lim_{k\to \infty }K_k=-J^TP^*, \lim_{k\to \infty }P_k=P^*$.
     \end{enumerate}

     Let $Y_k=Y+JK_k$. From \eqref{dyhatxirho}, we have
     \begin{align}\label{dyhatxirevi}
     	\dot{\hat{\xi}}_i= &Y\hat{\xi}_i+Ju_i+\begin{bmatrix}
     		E_i\\0
     	\end{bmatrix}\eta_i+\hat{\gamma}_i \notag \\
     	=& Y_k\hat{\xi}_i+J(-K_k\hat{\xi}_i+u_i)+\begin{bmatrix}
     		E_i\\0
     	\end{bmatrix}\eta_i+\hat{\gamma}_i
     \end{align}
     Based on \eqref{iteraeq1}-\eqref{dyhatxirevi}, we obtain
     \begin{align} \label{intequahat}
     	|\hat{\xi}_i&(t+\delta t)|_{P_k}-|\hat{\xi}_i(t)|_{P_k}\notag \\=&\int_{t}^{t+\delta t}\lbrack |\hat{\xi}_i|_{Y_k^TP_k+P_kY_k}+ 2(-K_k\hat{\xi}_i+u_i)^TJ^TP_k\hat{\xi}_i \notag \\&+2\eta_i^T\begin{bmatrix}
     		E_i\\0
     	\end{bmatrix}^TP_k\hat{\xi}_i  +2\hat{\gamma}_i^TP_k\hat{\xi}_i  \rbrack d\tau\notag \\
     	=& \int_{t}^{t+\delta t}[-|\hat{\xi}_i|_{I_{n+n_z}+K_k^TK_k}+2(K_k\hat{\xi}_i-u_i)^TK_{k+1}\hat{\xi}_i\notag \\& +2\eta_i^T\begin{bmatrix}
     		E_i\\0
     	\end{bmatrix}^TP_k\hat{\xi}_i  +2\hat{\gamma}_i^TP_k\hat{\xi}_i]d\tau
     \end{align}

     For convenience, define a constant matrix $M\in \mathbb{R}^{(n+n_z)^2\times \frac{(n+n_z)(n+n_z+1)}{2}}$ such that $M\text{vecs}(Q)=\text{vec}(Q)$ for any symmetric matrix $Q\in \mathbb{R}^{(n+n_z)\times (n+n_z)}$.

     Furthermore,  for any vectors $a\in \mathbb{R}^n,b \in \mathbb{R}^m$ and any integer $s\in \mathbb{N}_+$, define
     \begin{align}
     	\begin{split}\label{newdefi}
     		\delta _a=&[\text{vecv}(a(t_1))-\text{vecv}(a(t_0)), \cdots ,\\ &\text{vecv}(a(t_s))-\text{vecv}(a(t_{s-1}))]^T\\
     		\Gamma_{ab}=&[\int_{t_0}^{t_1}a\otimes b d\tau , \int_{t_1}^{t_2}a\otimes b d\tau, \cdots , \int_{t_{s-1}}^{t_s}a\otimes b d\tau]^T
     	\end{split}
     \end{align}

     Then, from \eqref{intequahat} and \eqref{newdefi}, we obtain
     \begin{align}\label{pilinearhat}
     	\hat{\Psi}_{PI,ik}\begin{bmatrix}
     		\text{vecs}(P_{k})\\
     		\text{vec}(K_{k+1})\\
     		\text{vec}(\begin{bmatrix}
     			E_i\\0
     		\end{bmatrix}^TP_k)
     	\end{bmatrix}=\hat{\Phi}_{PI,ik}
     \end{align}
     where
     \begin{align*}
     	\hat{\Psi}_{PI,ik}=&[\hat{\Psi}_{PI,ik}^1, \hat{\Psi}_{PI,ik}^2, \hat{\Psi}_{PI,ik}^3]\\
     	\hat{\Psi}_{PI,ik}^1=&\delta_{\hat{\xi}_i}-2\Gamma_{\hat{\xi}_{i}\hat{\gamma}_i}M\\
     	\hat{\Psi}_{PI,ik}^2=&-2\Gamma_{\hat{\xi}_i\hat{\xi}_i}(I_{n+n_z}\otimes K_k^T)+2\Gamma_{\hat{\xi}_iu_i}\\
     	\hat{\Psi}_{PI,ik}^3=&-2\Gamma_{\hat{\xi}_i\eta_i}\\
     	\hat{\Phi}_{PI,ik}=&-\Gamma_{\hat{\xi}_i\hat{\xi}_i}\text{vec}(I_{n+n_z}+K_k^TK_k)
     \end{align*}

     Starting with an initially stabilizing gain $K_0$ such that $Y+JK_0$ is Hurwitz, one can iteratively solve \eqref{pilinearhat} to obtain the approximate solution to \eqref{Ricca} and hence the desired feedback control gain from \eqref{gainrevi}.

     We further establish the solvability of  \eqref{pilinearhat} by the following lemma.
     \begin{lem} \label{lem3-1}
     	The matrix $\hat{\Psi}_{PI,ik}$ in \eqref{pilinearhat} has full column rank for $k\in \mathbb{N}$ if
     	\begin{align}\label{rankconpihat}
     		\textup{rank}([\Gamma_{\hat{\xi}_i\hat{\xi}_i}, \Gamma_{\hat{\xi}_iu_i}, \Gamma_{\hat{\xi}_i\eta_i}])=&\frac{(n+n_z)(n+n_z+1)}{2}\notag \\
     		&+(n+n_z)(m+q)
     	\end{align}
     \end{lem}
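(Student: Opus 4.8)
The plan is to prove triviality of the kernel of $\hat{\Psi}_{PI,ik}$ by tracing its columns back to the dynamics \eqref{dyhatxirho} and then invoking the Hurwitz property of $Y_k=Y+JK_k$. The first step is to establish a data identity that expresses the awkward first block $\hat{\Psi}_{PI,ik}^1$ (which carries $\delta_{\hat{\xi}_i}$ and $\Gamma_{\hat{\xi}_i\hat{\gamma}_i}$, neither of which appears in \eqref{rankconpihat}) purely through the three data matrices of \eqref{rankconpihat}. For an arbitrary symmetric $\bar{P}$ I would differentiate $\hat{\xi}_i^T\bar{P}\hat{\xi}_i$ along \eqref{dyhatxirho}, integrate over each sampling interval $[t_{l-1},t_l]$, and collect the results. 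Using $\hat{\xi}_i^T\bar{P}\hat{\xi}_i=\text{vecv}(\hat{\xi}_i)^T\text{vecs}(\bar{P})$, the bilinear identities that convert each integral into the matching $\Gamma$-block, and $M\,\text{vecs}(\bar{P})=\text{vec}(\bar{P})$ to fold the $\hat{\gamma}_i$-integral back into $\text{vecs}(\bar{P})$, this produces
\begin{align*}
\hat{\Psi}_{PI,ik}^1\text{vecs}(\bar{P})=&\Gamma_{\hat{\xi}_i\hat{\xi}_i}\text{vec}(Y^T\bar{P}+\bar{P}Y)+2\Gamma_{\hat{\xi}_iu_i}\text{vec}(J^T\bar{P})\\
&+2\Gamma_{\hat{\xi}_i\eta_i}\text{vec}\!\left(\begin{bmatrix}E_i\\0\end{bmatrix}^T\bar{P}\right).
\end{align*}

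Next, suppose $\hat{\Psi}_{PI,ik}\,\col(\text{vecs}(\bar{P}),\text{vec}(\bar{K}),\text{vec}(\bar{L}))=0$ with $\bar{P}=\bar{P}^T$. Substituting the identity above and regrouping by data block yields
\begin{align*}
&\Gamma_{\hat{\xi}_i\hat{\xi}_i}\text{vec}(Y^T\bar{P}+\bar{P}Y-2K_k^T\bar{K})+2\Gamma_{\hat{\xi}_iu_i}\text{vec}(J^T\bar{P}+\bar{K})\\
&+2\Gamma_{\hat{\xi}_i\eta_i}\text{vec}\!\left(\begin{bmatrix}E_i\\0\end{bmatrix}^T\bar{P}-\bar{L}\right)=0.
\end{align*}
Here one must be careful, because $[\Gamma_{\hat{\xi}_i\hat{\xi}_i},\Gamma_{\hat{\xi}_iu_i},\Gamma_{\hat{\xi}_i\eta_i}]$ is \emph{not} of full column rank: since $\hat{\xi}_i^TW\hat{\xi}_i\equiv 0$ for every skew-symmetric $W$, the quadratic block $\Gamma_{\hat{\xi}_i\hat{\xi}_i}$ annihilates all $\tfrac{(n+n_z)(n+n_z-1)}{2}$ skew-symmetric directions, which is exactly why \eqref{rankconpihat} asks for the rank to equal the number of unknowns rather than the number of columns. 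Using $\Gamma_{\hat{\xi}_i\hat{\xi}_i}\text{vec}(W)=\Gamma_{\hat{\xi}_i\hat{\xi}_i}M\,\text{vecs}(\tfrac{W+W^T}{2})$, I would replace $\Gamma_{\hat{\xi}_i\hat{\xi}_i}$ by $\Gamma_{\hat{\xi}_i\hat{\xi}_i}M$ in the displayed equation; condition \eqref{rankconpihat} is precisely the statement that $[\Gamma_{\hat{\xi}_i\hat{\xi}_i}M,\Gamma_{\hat{\xi}_iu_i},\Gamma_{\hat{\xi}_i\eta_i}]$ has full column rank. Hence each block vanishes, giving $\bar{K}=-J^T\bar{P}$, $\bar{L}=\begin{bmatrix}E_i\\0\end{bmatrix}^T\bar{P}$, and the symmetric part of $Y^T\bar{P}+\bar{P}Y-2K_k^T\bar{K}$ equal to zero.

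Finally, I would substitute $\bar{K}=-J^T\bar{P}$ into the symmetric-part equation $Y^T\bar{P}+\bar{P}Y-(K_k^T\bar{K}+\bar{K}^TK_k)=0$; the cross terms recombine as $Y^T\bar{P}+\bar{P}Y+K_k^TJ^T\bar{P}+\bar{P}JK_k=(Y+JK_k)^T\bar{P}+\bar{P}(Y+JK_k)$, so the equation becomes the Lyapunov equation $Y_k^T\bar{P}+\bar{P}Y_k=0$. Since $Y_k=Y+JK_k$ is Hurwitz by property~1) of the iteration \eqref{iteraeq1}--\eqref{iteraeq2}, the associated Lyapunov operator is nonsingular and forces $\bar{P}=0$, whence $\bar{K}=0$ and $\bar{L}=0$; thus the kernel is trivial and $\hat{\Psi}_{PI,ik}$ has full column rank. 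The main obstacle is the middle step: recognizing that the raw data matrix is rank-deficient by exactly the skew-symmetric dimension, so that the rank hypothesis may be applied only after projecting the first block onto symmetric matrices. Tracking this symmetric/skew split correctly is what leaves a genuine Hurwitz Lyapunov equation at the end rather than an indefinite Sylvester-type equation that need not force $\bar{P}=0$.
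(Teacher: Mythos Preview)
Your proof is correct and follows essentially the same route as the paper's: expand $\hat{\Psi}_{PI,ik}$ acting on a generic kernel element via the integrated dynamics, regroup the result into the three data blocks $\Gamma_{\hat{\xi}_i\hat{\xi}_i}$, $\Gamma_{\hat{\xi}_iu_i}$, $\Gamma_{\hat{\xi}_i\eta_i}$, reduce the quadratic block to its symmetric part (the paper does this by noting its $\Omega_1$ is already symmetric and passing to $\hat{\Gamma}_{\hat{\xi}_i}$, while you project explicitly via $M$), invoke \eqref{rankconpihat} to force each block to vanish, and finish with the Lyapunov equation $Y_k^T\bar{P}+\bar{P}Y_k=0$ and the Hurwitz property of $Y_k$. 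The only cosmetic difference is that the paper carries $Y_k$ from the outset (using \eqref{dyhatxirevi}) so the Lyapunov form appears directly once $\Omega_2=0$, whereas you start from $Y$ and recombine the cross terms at the end; your more explicit treatment of the skew-symmetric kernel of $\Gamma_{\hat{\xi}_i\hat{\xi}_i}$ is a nice clarification of why the rank hypothesis has the value it does.
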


     \begin{proof}
     	Motivated by \cite{jiang2012computational}, we prove this lemma by contradiction. Assume $\Xi_v=[W_v^T,U_v^T,Z_v^T]^T$ is a nonzero solution to the following equation:
     	\begin{align}\label{l11}
     		\hat{\Psi}_{PI,ik} \Xi_v=0
     	\end{align}
     	
     	There exist $W_m=W_m^T\in \mathbb{R}^{(n+n_z)\times (n+n_z)}, U_m\in \mathbb{R}^{m\times (n+n_z)}$ and $ Z_m\in \mathbb{R}^{q \times (n+n_z)}$, such that $W_v=\text{vecs}(W_m), U_v=\text{vec}(U_m),Z_v=\text{vec}(Z_m)$. Based on (\ref{dyhatxirevi}), (\ref{intequahat}) and (\ref{pilinearhat}), we know
     	\begin{align}
     		\begin{split}\label{l12}
     			\hat{\Psi}_{PI,ik} \Xi_v=& [\hat{\Psi}_{PI,ik}^1, \hat{\Psi}_{PI,ik}^2, \hat{\Psi}_{PI,ik}^3]\begin{bmatrix}
     				\text{vecs}(W_m)\\\text{vec}(U_m)\\\text{vec}(Z_m)
     			\end{bmatrix}\\
     			=&(\delta_{\hat{\xi}_i}-2\Gamma_{\hat{\xi}_{i}\hat{\gamma}_i}M)\text{vecs}(W_m)\\ &+(-2\Gamma_{\hat{\xi}_i\hat{\xi}_i}(I_{n+n_z}\otimes K_k^T)+2\Gamma_{\hat{\xi}_iu_i})\text{vec}(U_m)
     			\\&-2\Gamma_{\hat{\xi}_i\eta_i}\text{vec}(Z_m)\\
     			=& \Gamma_{\hat{\xi}_i\hat{\xi}_i}\text{vec}(Y_k^TW_m+W_mY_k) \\ &-\Gamma_{\hat{\xi}_i\hat{\xi}_i}\text{vec}(K_{k}^TJ^TW_m+W_mJK_{k})\\ &+2\Gamma_{\hat{\xi}_iu_i}\text{vec}(J^TW_m)\\& +2\Gamma_{\hat{\xi}_i\eta_i}\text{vec}(\begin{bmatrix}
     				E_i\\0
     			\end{bmatrix}^TW_m)\\&-\Gamma_{\hat{\xi}_i\hat{\xi}_i}\text{vec}(K_{k}^TU_m+U_m^TK_{k})\\&+2\Gamma_{\hat{\xi}_iu_i}\text{vec}(U_m)-2\Gamma_{\hat{\xi}_i\eta_i}\text{vec}(Z_m)\\
     			=&\Gamma_{\hat{\xi}_i\hat{\xi}_i}\text{vec}(\Omega_1)+2\Gamma_{\hat{\xi}_iu_i}\text{vec}(\Omega_2)+2\Gamma_{\hat{\xi}_i\eta_i}\text{vec}(\Omega_3)\\
     			=&0
     		\end{split}	
     	\end{align}
     	where
     	\begin{align}
     		\begin{split}\label{l13}
     			\Omega_1=&Y_k^TW_m+W_mY_k-K_{k}^T(J^TW_m+U_m)\\&-(W_mJ+U_m^T)K_{k}\\
     			\Omega_2=&J^TW_m+U_m\\
     			\Omega_3=&\begin{bmatrix}
     				E_i\\0
     			\end{bmatrix}^TW_m-Z_m
     		\end{split}
     	\end{align}

     	Noting that $\Omega_1\in \mathbb{R}^{(n+n_z)\times (n+n_z)}$ is symmetric, we have
     	\begin{align}\label{l14}
     		\Gamma_{\hat{\xi}_i\hat{\xi}_i}\text{vec}(\Omega_1)
     		=&\hat{\Gamma}_{\hat{\xi}_{i}}\text{vecs}(\Omega_1)
     	\end{align}
     	where 
     $$\hat{\Gamma}_{\hat{\xi}_i}=[\int_{t_0}^{t_1}\text{vecv}(\hat{\xi}_i) d\tau ,  \cdots , \int_{t_{s-1}}^{t_s}\text{vecv}(\hat{\xi}_i) d\tau]^T$$.
     	
     	Then, (\ref{l12}) implies the following:
     	\begin{align*}
     		[\hat{\Gamma}_{\hat{\xi}_{i}}, 2\Gamma_{\hat{\xi}_iu_i}, 2\Gamma_{\hat{\xi}_i\eta_i}]\begin{bmatrix}
     			\text{vecs}(\Omega_1)\\ \text{vec}(\Omega_2)\\ \text{vec}(\Omega_3)
     		\end{bmatrix}=0
     	\end{align*}
     	
     	Under the full rank condition of (\ref{rankconpihat}), we get
     	\begin{align*}
     		\begin{bmatrix}
     			\text{vecs}(\Omega_1)\\ \text{vec}(\Omega_2)\\ \text{vec}(\Omega_3)
     		\end{bmatrix}=0
     	\end{align*}
     	which implies $U_m=-J^TW_m, Z_m=\begin{bmatrix}
     		E_i\\0
     	\end{bmatrix}^TW_m$ and $Y_k^TW_m+W_mY_k=0$. It is known that $\sigma(Y_k)\subset \mathbb{C}^{-}$ from Property 1 of Kleinman's algorithm \eqref{iteraeq1}-\eqref{iteraeq2}, which means $W_m=0$ is the only solution to the equation $Y_k^TW_m+W_mY_k=0$. Thus, we have $\Xi_v=0$, which contradicts the assumption that $\Xi_v \neq 0$. The proof is completed.
     \end{proof}

\begin{rem}\label{rem3v}
     It is interesting to compare our approach with that of	\cite{gao2021reinforcement}, which studied the cooperative output regulation problem for SISO linear MASs over acyclic graph via a distributed internal model approach. The augmented system of \cite{gao2021reinforcement} takes the following form
   \begin{align}\label{gaoaugmented}
     	\dot{\xi}_i = Y{\xi}_i+Ju_i+\begin{bmatrix}
     		E_i\\\alpha_{i0}G_2F
     	\end{bmatrix}v-\sum_{j\in \mathcal{N}_i^-} \Theta_{ij}\xi_j
     	\end{align}
     	where $\xi_i$ represents the augmented state of agent $i$ in \cite{gao2021reinforcement}, $\Theta_{ij}=\begin{bmatrix}
     		0&0\\\alpha_{ij}G_2C&0
     	\end{bmatrix}$, $\alpha_{ij}=\frac{a_{ij}}{\sum_{j\in \mathcal{N}_i} a_{ij}}$. {\color{black}Here, for fair comparison, we have ignored an error term in \eqref{gaoaugmented} caused by using the adaptive distributed observer \eqref{adaobserver} instead of the distributed observer \eqref{observer}, and set the weighting matrices $Q$ and $R$ in \cite{gao2021reinforcement} to be identity matrices.}

Based on  \eqref{gaoaugmented}, replace $v$ with $\eta_i$ and ignore the error caused by this replacement, the following equations are derived:
     	\begin{align}\label{gaolinear}
     	{\Psi}_{PI,ik}\begin{bmatrix}
     			\text{vecs}(P_{k})\\
     			\text{vec}(K_{k+1})\\
     			\text{vec}(\begin{bmatrix}
     				E_i\\\alpha_{i0}G_2F
     			\end{bmatrix}^TP_k)\\
     				\text{vec}(\Xi_{ik})
     		\end{bmatrix}={\Phi}_{PI,ik}
     	\end{align}
     	where
     	\begin{align*}
     		{\Psi}_{PI,ik}=&[\delta_{\xi_{i}}, -2\Gamma_{{\xi}_i{\xi}_i}(I_{n+n_z}\otimes K_k^T)+2\Gamma_{{\xi}_iu_i}, \\&-2\Gamma_{{\xi}_i\eta_i}, 2\Gamma_{{\xi}_i\psi_i}]\\
     		{\Phi}_{PI,ik}=&-\Gamma_{{\xi}_i{\xi}_i}\text{vec}(I_{n+n_z}+K_k^TK_k)\\
     		\psi_i=&[\xi_{j_1}^T,\xi_{j_2}^T,\cdots,\xi_{j_{|\mathcal{N}_i^-|}}^T]^T\in \mathbb{R}^{h_i}\\
     		\Xi_{ik}=&[P_k\Theta_{ij_1},P_k\Theta_{ij_2},\cdots, P_k\Theta_{ij_{|\mathcal{N}_i^-|}}]^T
%     		\Theta_{ij}=&\begin{bmatrix}
%     			0&0\\\alpha_{ij}G_2C&0
%     		\end{bmatrix}\\
%     		\alpha_{ij}=&\frac{a_{ij}}{\sum_{j\in \mathcal{N}_i} a_{ij}}
     	\end{align*}
  Since the equations in \eqref{gaoaugmented} are coupled, which makes \eqref{gaolinear} much more complicated.
  In particular,  the unknown matrix $\Xi_{ik}$  not only depends on ${\xi}_i$ but also ${\xi}_j$ with $j\in \mathcal{N}_i^-$.  Let  $h_i=|\mathcal{N}_i^-|(n+n_z)$ denotes the dimension of $\psi_i$. Then,
  the number of the unknown variables in \eqref{gaolinear} is $\frac{(n+n_z)(n+n_z+1)}{2} + (n+n_z)(m+q+h_i)$. In contrast, our equation  \eqref{pilinearhat} only contains  $\frac{(n+n_z)(n+n_z+1)}{2} + (n+n_z)(m+q)$ unknown variables, thus reducing the number of the unknown variables by $(n+n_z)h_i$.
  Moreover,  the solvability condition for \eqref{gaolinear}  is as follows
     	\begin{align*}
     		&\textup{rank}([\Gamma_{{\xi}_i{\xi}_i}, \Gamma_{{\xi}_iu_i},  \Gamma_{{\xi}_i\eta_i}, \Gamma_{{\xi}_i\psi_i}])\\=&\frac{(n+n_z)(n+n_z+1)}{2}
     		+(n+n_z)(m+q+h_i)
     	\end{align*}
     	Again, checking these conditions is much more tedious than checking the rank condition \eqref{rankconpihat}.
     \end{rem}

     To get some idea on the degree of the improvement of our PI approach over the  PI algorithm in \cite{gao2021reinforcement},  consider a moderate example with $n=10,m=8,p=4,q=20,n_z=40, N = 5$. Let us assume each follower has  $2$ neighbors, i.e. $|\mathcal{N}_i^-|=2$. Then, $h_i=2(n+n_z)=100$. TABLE \ref{tab11} and TABLE \ref{tab22} show, for each follower,
     the numbers of unknown variables and rank conditions of the two algorithms, respectively.  The contrast is stark.

     \begin{table}[!ht]
     	\caption{Comparison of Computational Complexity}\label{tab11}
     	\center
     	\begin{tabular}{|c|c|c|}\hline

     		Algorithm & Equation Number &Unknown variables \\ \hline
     		\cite{gao2021reinforcement}'s PI Algorithm & \eqref{gaolinear}	& 7675 \\  \hline
     		
     		Our PI Algorithm &\eqref{pilinearhat} & 2675 \\ \hline
     		
     	\end{tabular}
     	
     \end{table}

     \begin{table}[!ht]
     	\caption{Comparison of rank condition}\label{tab22}
     	\center
     	\begin{tabular}{|c|c|}\hline

     		Algorithm & Rank Condition for Data Collecting \\ \hline
     		\cite{gao2021reinforcement}'s PI Algorithm & 	$\textup{rank}([\Gamma_{{\xi}_i{\xi}_i}, \Gamma_{{\xi}_iu_i},  \Gamma_{{\xi}_i\eta_i}, \Gamma_{{\xi}_i\psi_i}])=7675$
     		 \\ \hline
     		
     		Our PI Algorithm& $\textup{rank}([\Gamma_{\hat{\xi}_i\hat{\xi}_i}, \Gamma_{\hat{\xi}_iu_i}, \Gamma_{\hat{\xi}_i\eta_i}])= 2675$   \\  \hline
     		
     	\end{tabular}
     	
     \end{table}

     \subsection{VI-based Approach }\label{sec3-2}

     The PI-based method requires an initially stabilizing feedback gain $K_0$ to start the iteration. Obtaining such a $K_0$ usually requires some prior knowledge of the unknown matrices $Y$ and $J$. For the case where  an initially stabilizing feedback gain $K_0$ is not available, one can resort to the VI-based approach.   Reference  \cite{gao2021reinforcement} also developed a VI-based algorithm for data-driven CORP for MASs with SISO followers over acyclic graphs based on  the algorithm of \cite{bian2016}. In this subsection, we further deal with the more general case with  a simpler and more efficient VI-based algorithm.

    Like \cite{bian2016},   let $\epsilon_k$ be a series of time steps satisfying
     \begin{align}
     	\epsilon_k>0, \; \sum_{k=0}^{\infty}\epsilon_k=\infty, \; \sum_{k=0}^{\infty}\epsilon_k^2<\infty
     \end{align}
     $\{B_q\}_{q=0}^{\infty}$ is a collection of bounded sets satisfying
     \begin{align}
     	B_q\subset B_{q+1}, \; q\in \mathbb{N}, \; \lim\limits_{q\to \infty}B_q=\mathcal{P}^{n+n_z}
     \end{align}
     and $\varepsilon>0$ is the convergence criterion. Then,  by applying the algorithm from  \cite{bian2016}, we obtain the following VI Algorithm \ref{vialg1} to approximate the solution to \eqref{Ricca}.

     \begin{algorithm}
     	\caption{VI  Algorithm for solving \eqref{Ricca} \cite{bian2016}}\label{vialg1}
     	\begin{algorithmic}
     		\State Choose $P_0=P_0^T>0$. $k,q\leftarrow 0$.
     		\Loop
     		\State $\tilde{P}_{k+1}\leftarrow P_k+\epsilon_k (Y^TP_k+P_kY-P_kJJ^TP_k+I_{n+n_z})$
     		\If{$\tilde{P}_{k+1}\notin B_q$}
     		\State $P_{k+1}\leftarrow P_0$. $q\leftarrow q+1$.
     		\ElsIf{$|\tilde{P}_{k+1}-P_k|/\epsilon_k<\varepsilon$}
     		\State \Return $P_k$ as an approximation to $P^*$
     		\Else
     		\State $P_{k+1}\leftarrow \tilde{P}_{k+1}$
     		\EndIf
     		\State $k \leftarrow k+1$
     		\EndLoop
     	\end{algorithmic}
     \end{algorithm}

     By  Theorem 3.3 of \cite{bian2016}, 
     	Algorithm \ref{vialg1} is such that $\lim\limits_{k\to \infty}P_k=P^*$.

     For simplicity, define $H_k=Y^TP_k+P_kY, K_k=-J^TP_k$. From \eqref{dyhatxirho}, we have
     \begin{align}\label{intvihat}
     	|\hat{\xi}_i&(t+\delta t)|_{P_k}-|\hat{\xi}_i(t)|_{P_k}\notag \\=&\int_{t}^{t+\delta t}\lbrack |\hat{\xi}_i|_{H_k}-2u_i^TK_k\hat{\xi}_i+ 2\eta_i^T\begin{bmatrix}
     		E_i\\0
     	\end{bmatrix}^TP_k\hat{\xi}_i  \notag \\
     	&+2\hat{\gamma}_i^T P_k\hat{\xi}_i \rbrack d\tau
     \end{align}

     \begin{rem}
     	The definition of $K_k$ in VI-based approach is slightly different from that of PI-based approach. In Section \ref{sec3-1}, $K_{k+1}\triangleq-J^TP_k$. In this section, we have $K_k\triangleq -J^TP_k$.
     \end{rem}

     Thus, \eqref{intvihat} together with \eqref{newdefi} gives
     \begin{align}\label{vilinearhat}
     	\hat{\Psi}_{VI,i}\begin{bmatrix}
     		\text{vecs}(H_k)\\
     		\text{vec}(K_k) \\
     		\text{vec}(\begin{bmatrix}
     			E_i\\0
     		\end{bmatrix}^TP_k)
     	\end{bmatrix}=\hat{\Phi}_{VI, ik}
     \end{align}
     where
     \begin{align*}
     	\hat{\Psi}_{VI,i}=&[\hat{\Psi}_{VI,i}^1, \hat{\Psi}_{VI,i}^2, \hat{\Psi}_{VI,i}^3]\\
     	\hat{\Psi}_{VI,i}^1=&\Gamma_{\hat{\xi}_i\hat{\xi}_i}M\\
     	\hat{\Psi}_{VI,i}^2=&-2\Gamma_{\hat{\xi}_iu_i}\\
     	\hat{\Psi}_{VI,i}^3=&2\Gamma_{\hat{\xi}_i\eta_i}\\
     	\hat{\Phi}_{VI, ik}=&[\delta_{\hat{\xi}_i}-2\Gamma_{\hat{\xi}_i\hat{\gamma}_i}M]\text{vecs}(P_k)
     \end{align*}

     The solvability of \eqref{vilinearhat} is guaranteed by the following lemma.
     \begin{lem} \label{lem4}
     	The matrix $\hat{\Psi}_{VI,i}$ in \eqref{vilinearhat} has full column rank if
     	\begin{align}\label{rankconvihat}
     		\textup{rank}([\Gamma_{\hat{\xi}_i\hat{\xi}_i}, \Gamma_{\hat{\xi}_iu_i}, \Gamma_{\hat{\xi}_i\eta_i}])=& \frac{(n+n_z)(n+n_z+1)}{2}\notag \\
     		&+(n+n_z)(m+q)
     	\end{align}
     \end{lem}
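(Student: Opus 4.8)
The plan is to prove the lemma by contradiction, paralleling the argument for Lemma~\ref{lem3-1} but exploiting the fact that in the VI formulation the three unknown blocks of \eqref{vilinearhat} are no longer coupled through the iteration, which makes the proof substantially shorter. First I would assume, to the contrary, that $\hat{\Psi}_{VI,i}$ does not have full column rank, so that there exists a nonzero $\Xi_v=[W_v^T,U_v^T,Z_v^T]^T$ with $\hat{\Psi}_{VI,i}\Xi_v=0$. Matching the three column blocks of $\hat{\Psi}_{VI,i}$, I would write $W_v=\text{vecs}(W_m)$ for some symmetric $W_m\in\mathbb{R}^{(n+n_z)\times(n+n_z)}$, $U_v=\text{vec}(U_m)$ with $U_m\in\mathbb{R}^{m\times(n+n_z)}$, and $Z_v=\text{vec}(Z_m)$ with $Z_m\in\mathbb{R}^{q\times(n+n_z)}$.

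Substituting the explicit blocks $\hat{\Psi}_{VI,i}^1=\Gamma_{\hat{\xi}_i\hat{\xi}_i}M$, $\hat{\Psi}_{VI,i}^2=-2\Gamma_{\hat{\xi}_iu_i}$ and $\hat{\Psi}_{VI,i}^3=2\Gamma_{\hat{\xi}_i\eta_i}$, the kernel relation becomes
\begin{align*}
\Gamma_{\hat{\xi}_i\hat{\xi}_i}M\,\text{vecs}(W_m)-2\Gamma_{\hat{\xi}_iu_i}\text{vec}(U_m)+2\Gamma_{\hat{\xi}_i\eta_i}\text{vec}(Z_m)=0.
\end{align*}
The decisive simplification over the PI case is that no substitution of the closed-loop dynamics and no intertwining of the unknowns (encoded in the matrices $\Omega_1,\Omega_2,\Omega_3$ of \eqref{l13}) is needed here, since the three unknowns already enter linearly and independently. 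I would then invoke the defining property $M\,\text{vecs}(W_m)=\text{vec}(W_m)$ together with the symmetry of $W_m$ to replace $\Gamma_{\hat{\xi}_i\hat{\xi}_i}\text{vec}(W_m)$ by $\hat{\Gamma}_{\hat{\xi}_i}\text{vecs}(W_m)$, exactly as in \eqref{l14}. The kernel relation then collapses to
\begin{align*}
[\hat{\Gamma}_{\hat{\xi}_i},\,\Gamma_{\hat{\xi}_iu_i},\,\Gamma_{\hat{\xi}_i\eta_i}]\begin{bmatrix}\text{vecs}(W_m)\\-2\,\text{vec}(U_m)\\2\,\text{vec}(Z_m)\end{bmatrix}=0.
\end{align*}

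The core of the argument is to pass from the rank hypothesis \eqref{rankconvihat}, stated for the full regressor $[\Gamma_{\hat{\xi}_i\hat{\xi}_i},\Gamma_{\hat{\xi}_iu_i},\Gamma_{\hat{\xi}_i\eta_i}]$, to full column rank of the symmetry-reduced matrix $[\hat{\Gamma}_{\hat{\xi}_i},\Gamma_{\hat{\xi}_iu_i},\Gamma_{\hat{\xi}_i\eta_i}]$. I would justify this by noting that $\Gamma_{\hat{\xi}_i\hat{\xi}_i}\text{vec}(Q)=0$ for every skew-symmetric $Q$, so the column space of $\Gamma_{\hat{\xi}_i\hat{\xi}_i}$ agrees with that of $\hat{\Gamma}_{\hat{\xi}_i}=\Gamma_{\hat{\xi}_i\hat{\xi}_i}M$; the two regressors therefore have the same rank, and \eqref{rankconvihat} exactly certifies that the $\frac{(n+n_z)(n+n_z+1)}{2}+(n+n_z)(m+q)$ columns of the reduced matrix are linearly independent. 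Hence $\text{vecs}(W_m)=0$, $\text{vec}(U_m)=0$ and $\text{vec}(Z_m)=0$, i.e. $\Xi_v=0$, contradicting the choice of $\Xi_v$. The step I expect to require the most care is precisely this last one: confirming that the symmetric reduction loses no information and that the prescribed rank value is the correct certificate for injectivity; the remainder is an immediate consequence of the decoupled structure of \eqref{vilinearhat}.
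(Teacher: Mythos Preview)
Your argument is correct and rests on exactly the same core observation as the paper's proof: the identity $\Gamma_{\hat{\xi}_i\hat{\xi}_i}M=\hat{\Gamma}_{\hat{\xi}_i}$ and the fact that $\Gamma_{\hat{\xi}_i\hat{\xi}_i}$ and $\hat{\Gamma}_{\hat{\xi}_i}$ share the same column space, so that the rank hypothesis \eqref{rankconvihat} certifies full column rank of $[\hat{\Gamma}_{\hat{\xi}_i},\Gamma_{\hat{\xi}_iu_i},\Gamma_{\hat{\xi}_i\eta_i}]$. The only difference is packaging: the paper proceeds directly, writing $\hat{\Psi}_{VI,i}=[\hat{\Gamma}_{\hat{\xi}_i},\Gamma_{\hat{\xi}_iu_i},\Gamma_{\hat{\xi}_i\eta_i}]\,\Theta$ with $\Theta=\mathrm{blockdiag}(I,-2I,2I)$ invertible, and concludes $\mathrm{rank}(\hat{\Psi}_{VI,i})=\mathrm{rank}([\Gamma_{\hat{\xi}_i\hat{\xi}_i},\Gamma_{\hat{\xi}_iu_i},\Gamma_{\hat{\xi}_i\eta_i}])$ immediately, whereas you wrap the same identity in a contradiction argument modeled on Lemma~\ref{lem3-1}. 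That wrapper is harmless but unnecessary here, since, unlike the PI case, no Hurwitz property of a closed-loop matrix is needed to force the kernel element to zero; the direct factorization is shorter and makes the decoupled structure you noted even more transparent.
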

     \begin{proof}
     	Given any column vector $a\in \mathbb{R}^{n+n_z}$ and any symmetric matrix $Q\in \mathbb{R}^{(n+n_z)\times (n+n_z)}$, it can be verified that $a^TQa=(a\otimes a)^T\text{vec}(Q) = \text{vecv}(a)^T\text{vecs}(Q)$. Further, recalling that $\text{vec}(Q)=M\text{vecs}(Q)$ from the definition of matrix $M$ gives
     	$$(a\otimes a)^TM\text{vecs}(Q)=\text{vecv}(a)^T\text{vecs}(Q)$$
     	Since $Q$ can be any symmetric matrix, we obtain $(a\otimes a)^TM=\text{vecv}(a)^T$ for any column vector $a\in \mathbb{R}^{n+n_z}$.
     	Thus, $\Gamma_{\hat{\xi}_i\hat{\xi}_i}M=\hat{\Gamma}_{\hat{\xi}_i}$, where $\hat{\Gamma}_{\hat{\xi}_i}=[\int_{t_0}^{t_1}\text{vecv}(\hat{\xi}_i) d\tau , \int_{t_1}^{t_2}\text{vecv}(\hat{\xi}_i) d\tau, \cdots ,\\ \int_{t_{s-1}}^{t_s}\text{vecv}(\hat{\xi}_i) d\tau]^T$.
     	
     	Since
     	\begin{align*}
     		\hat{\Psi}_{VI,i}&=[\Gamma_{\hat{\xi}_i\hat{\xi}_i}M, -2\Gamma_{\hat{\xi}_iu_i}, 2\Gamma_{\hat{\xi}_i\eta_i}]\\
     		&=[\hat{\Gamma}_{\hat{\xi}_i}, \Gamma_{\hat{\xi}_iu_i}, \Gamma_{\hat{\xi}_i\eta_i}]\Theta
     	\end{align*}
     	where
     	\begin{align*}
     		\Theta=\begin{bmatrix}
     			I_{\frac{(n+n_z)(n+n_z+1)}{2}}&~&~\\~&-2I_{m(n+n_z)}&~\\~&~&2I_{q(n+n_z)}
     		\end{bmatrix},
     	\end{align*}
     	 $\text{rank}(\hat{\Psi}_{VI,i})=\text{rank}([\hat{\Gamma}_{\hat{\xi}_i}, \Gamma_{\hat{\xi}_iu_i}, \Gamma_{\hat{\xi}_i\eta_i}])$. Since, by inspection,  $\text{rank}([\hat{\Gamma}_{\hat{\xi}_i}, \Gamma_{\hat{\xi}_iu_i}, \Gamma_{\hat{\xi}_i\eta_i}])=\text{rank}([\Gamma_{\hat{\xi}_i\hat{\xi}_i}, \Gamma_{\hat{\xi}_iu_i}, \Gamma_{\hat{\xi}_i\eta_i}])$. Thus, $\text{rank}(\hat{\Psi}_{VI,i})=\text{rank}([\Gamma_{\hat{\xi}_i\hat{\xi}_i}, \Gamma_{\hat{\xi}_iu_i}, \Gamma_{\hat{\xi}_i\eta_i}])$. In conclusion, \eqref{rankconvihat} means $\hat{\Psi}_{VI,i}$ has full column rank.
     \end{proof}

      In Algorithm \ref{vialg1}, $\tilde{P}_{k+1}$ can be expressed as $\tilde{P}_{k+1}\triangleq P_k+\epsilon_k (H_k-K_k^TK_k+I_{n+n_z})$.  Thus, by iteratively applying the solution of \eqref{vilinearhat} to Algorithm \ref{vialg1}, one can obtain the approximate solution to \eqref{Ricca} as well as the feedback control gain  \eqref{gainrevi} without using the information of system matrices.

       \begin{rem}
     	We further compare our approach with the one in
     		\cite{gao2021reinforcement}, which  derived the following equations from \eqref{gaoaugmented} for their VI algorithm:
     		\begin{align}\label{gaovilinear}
     		{\Psi}_{VI,i}\begin{bmatrix}
     			\text{vecs}(H_{k})\\
     			\text{vec}(K_{k})\\
     			\text{vec}(\begin{bmatrix}
     				E_i\\\alpha_{i0}G_2F
     			\end{bmatrix}^TP_k)\\
     			\text{vec}(\Xi_{ik})
     		\end{bmatrix}={\Phi}_{VI,ik}
     		\end{align}
     		where
     		\begin{align*}
     			{\Psi}_{VI,i}=&[\hat{\Gamma}_{{\xi}_i}, -2\Gamma_{{\xi}_iu_i},2\Gamma_{{\xi}_i\eta_i}, -2\Gamma_{{\xi}_i\psi_i}]\\
     			{\Phi}_{VI,ik}=&\delta_{\xi_i}\text{vecs}(P_k)
     		\end{align*}
     		 Again, since the equations in \eqref{gaoaugmented} are coupled, \eqref{gaovilinear} is much more complicated than \eqref{vilinearhat}.
     		In particular,
     		the number of unknown variables in \eqref{gaovilinear} is $\frac{(n+n_z)(n+n_z+1)}{2} + (n+n_z)(m+q+h_i)$, while our equation  \eqref{vilinearhat} only contains  $\frac{(n+n_z)(n+n_z+1)}{2} + (n+n_z)(m+q)$ unknown variables.  Thus, our method has  reduced $(n+n_z)h_i$  unknown variables.
     		Moreover,  the solvability condition for \eqref{gaovilinear}  is
     		\begin{align*}
     			&\textup{rank}([\Gamma_{{\xi}_i{\xi}_i}, \Gamma_{{\xi}_iu_i},  \Gamma_{{\xi}_i\eta_i}, \Gamma_{{\xi}_i\psi_i}])\\=&\frac{(n+n_z)(n+n_z+1)}{2}
     			+(n+n_z)(m+q+h_i)
     		\end{align*}
     		Thus, checking these conditions is much more tedious than checking the rank condition \eqref{rankconvihat}.
     		
     \end{rem}

     To obtain a concrete comparison, again consider the example with $n=10,m=8,p=4,q=20,n_z=40, N = 5$. Assume each follower has  $2$ neighbors, i.e. $|\mathcal{N}_i^-|=2$ and $h_i=2(n+n_z)=100$. TABLE \ref{tab11vi} and TABLE \ref{tab22vi} show, for each follower,
     the numbers of unknown variables and rank conditions of the two VI-based algorithms, respectively.  The contrast is stark.

     \begin{table}[!ht]
     	\caption{Comparison of Computational Complexity}\label{tab11vi}
     	\center
     	\begin{tabular}{|c|c|c|}\hline

     		Algorithm & Equation Number &Unknown variables \\ \hline
     		\cite{gao2021reinforcement}'s VI Algorithm & \eqref{gaovilinear}	& 7675 \\  \hline
     		
     		Our VI Algorithm &\eqref{vilinearhat} & 2675 \\ \hline
     		
     	\end{tabular}
     	
     \end{table}

     \begin{table}[!ht]
     	\caption{Comparison of rank condition}\label{tab22vi}
     	\center
     	\begin{tabular}{|c|c|}\hline

     		Algorithm & Rank Condition for Data Collecting \\ \hline
     		\cite{gao2021reinforcement}'s VI Algorithm & 	$\textup{rank}([\Gamma_{{\xi}_i{\xi}_i}, \Gamma_{{\xi}_iu_i},  \Gamma_{{\xi}_i\eta_i}, \Gamma_{{\xi}_i\psi_i}])=7675$
     		\\ \hline
     		
     		Our VI Algorithm& $\textup{rank}([\Gamma_{\hat{\xi}_i\hat{\xi}_i}, \Gamma_{\hat{\xi}_iu_i}, \Gamma_{\hat{\xi}_i\eta_i}])= 2675$   \\  \hline
     		
     	\end{tabular}
     	
     \end{table}

   \section{Further Improvement over PI and VI Algorithms}\label{sec4}

  It can be seen that both PI and VI algorithms boil down to solving various sequences of linear equations, whose computational cost may be formidable for a large MAS.
  Thus, it is interesting to  further improve our data-driven algorithms in Section \ref{sec3}.

   \subsection{An Improved PI-based Algorithm}\label{sec4-1}

   Since,  from \eqref{iteraeq2}, $K_{k+1}=-J^TP_k$  where $J=\begin{bmatrix}
   	B\\0
   \end{bmatrix}$, it is possible to further improve our PI-based algorithm  over  \eqref{pilinearhat} as follows.

   \begin{enumerate}
   	\item From \eqref{intequahat}, by letting $P_k=I_{n+n_z}$, we obtain
   	\begin{align} \label{intequahatnew}
   		|\hat{\xi}_i&(t+\delta t)|_{I_{n+n_z}}-|\hat{\xi}_i(t)|_{I_{n+n_z}}\notag \\=&\int_{t}^{t+\delta t}\lbrack |\hat{\xi}_i|_{Y^T+Y}+ 2u_i^TJ^T\hat{\xi}_i \notag+2\eta_i^T\begin{bmatrix}
   			E_i\\0
   		\end{bmatrix}^T\hat{\xi}_i  \\&+2\hat{\gamma}_i^T\hat{\xi}_i  \rbrack d\tau\notag \\
   		=& \int_{t}^{t+\delta t}\lbrack |\hat{\xi}_i|_{Y^T+Y}+ 2u_i^TB^Tx_i \notag +2\eta_i^TE_i^Tx_i  \\&+2\hat{\gamma}_i^T\hat{\xi}_i  \rbrack d\tau
   	\end{align}
   	
   	Combining \eqref{newdefi} and \eqref{intequahatnew} gives
   	\begin{align}\label{pilinearnew}
   		\hat{\Psi}_{0}\begin{bmatrix}
   			\text{vecs}(Y^T+Y)\\
   			\text{vec}(B^T)\\
   			\text{vec}(E_i^T)
   		\end{bmatrix}=\hat{\Phi}_{0}
   	\end{align}
   	where
   	\begin{align*}
   		\hat{\Psi}_{0}=&[\Gamma_{\hat{\xi}_i\hat{\xi}_i}M, 2\Gamma_{x_iu_i},2\Gamma_{x_i\eta_i}]\\
   		\hat{\Phi}_{0}=&(\delta_{\hat{\xi}_i}-2\Gamma_{\hat{\xi}_i\hat{\gamma}_i}M)\text{vecs}(I_{n+n_z})
   	\end{align*}
   Solving \eqref{pilinearnew} gives $B$ and $E_i$, and hence $J$ and $\begin{bmatrix}
   		E_i\\0
   	\end{bmatrix}$.
   	\item Since $K_{k+1}=-J^TP_k$,   with  $J$ and $\begin{bmatrix}
   		E_i\\0
   	\end{bmatrix}$ known,  \eqref{pilinearhat} can be reduced to the following
   	\begin{align}\label{pinew2}
   		\hat{\Psi}_{PI,ik}'\begin{bmatrix}
   			\text{vecs}(P_{k})
   		\end{bmatrix}=\hat{\Phi}_{PI,ik}'
   	\end{align}
   	where
   	\begin{align*}
   		\hat{\Psi}_{PI,ik}'=&\delta_{\hat{\xi}_i}-2\Gamma_{\hat{\xi}_{i}\hat{\gamma}_i}M\\
   		&+2\Gamma_{\hat{\xi}_i\hat{\xi}_i}(I_{n+n_z}\otimes K_k^TJ^T)M\\
   		&-2\Gamma_{\hat{\xi}_iu_i}(I_{n+n_z}\otimes J^T)M\\
   		&-2\Gamma_{\hat{\xi}_i\eta_i}(I_{n+n_z}\otimes \begin{bmatrix}
   			E_i\\0
   		\end{bmatrix}^T)M\\
   		\hat{\Phi}_{PI,ik}'=& -\Gamma_{\hat{\xi}_i\hat{\xi}_i}\text{vec}(I_{n+n_z}+K_k^TK_k)
   	\end{align*}
   	Solving \eqref{pinew2} gives  $P_k$ and $K_{k+1}=-J^TP_k$.
   \end{enumerate}

   We further establish the solvability of  \eqref{pilinearnew} and \eqref{pinew2} by the following two lemmas.
   \begin{lem} \label{lem1}
   	The matrix $\hat{\Psi}_{0}$ in \eqref{pilinearnew} has full column rank  if
   	\begin{align}\label{rankconpinew}
   		\textup{rank}([\Gamma_{\hat{\xi}_i\hat{\xi}_i}, \Gamma_{x_iu_i}, \Gamma_{x_i\eta_i}])=&\frac{(n+n_z)(n+n_z+1)}{2}\notag \\
   		&+n(m+q)
   	\end{align}
   \end{lem}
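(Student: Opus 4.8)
The plan is to mirror the argument used to prove Lemma \ref{lem4}, since $\hat{\Psi}_{0}$ in \eqref{pilinearnew} has exactly the same block structure $[\Gamma_{\hat{\xi}_i\hat{\xi}_i}M,\,2\Gamma_{x_iu_i},\,2\Gamma_{x_i\eta_i}]$ as $\hat{\Psi}_{VI,i}$, the only difference being that the two cross-term blocks are built from $x_i$ rather than from $\hat{\xi}_i$. This difference is a direct consequence of the zero blocks in $J=\begin{bmatrix}B\\0\end{bmatrix}$ and $\begin{bmatrix}E_i\\0\end{bmatrix}$: in \eqref{intequahatnew} the products $u_i^TJ^T\hat{\xi}_i$ and $\eta_i^T\begin{bmatrix}E_i\\0\end{bmatrix}^T\hat{\xi}_i$ collapse to $u_i^TB^Tx_i$ and $\eta_i^TE_i^Tx_i$, so only the $x_i$-component of $\hat{\xi}_i$ survives and the corresponding column counts shrink from $(n+n_z)(m+q)$ to $n(m+q)$.

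First I would invoke the identity $(a\otimes a)^TM=\text{vecv}(a)^T$ already established inside the proof of Lemma \ref{lem4}, which yields $\Gamma_{\hat{\xi}_i\hat{\xi}_i}M=\hat{\Gamma}_{\hat{\xi}_i}$. This lets me factor
\begin{align*}
\hat{\Psi}_{0}=[\hat{\Gamma}_{\hat{\xi}_i},\,\Gamma_{x_iu_i},\,\Gamma_{x_i\eta_i}]\,\Theta_0,
\end{align*}
where $\Theta_0=\text{blockdiag}(I_{\frac{(n+n_z)(n+n_z+1)}{2}},\,2I_{nm},\,2I_{nq})$ is square and nonsingular. Hence $\text{rank}(\hat{\Psi}_{0})=\text{rank}([\hat{\Gamma}_{\hat{\xi}_i},\,\Gamma_{x_iu_i},\,\Gamma_{x_i\eta_i}])$.

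Next I would note that $\hat{\Gamma}_{\hat{\xi}_i}$ and $\Gamma_{\hat{\xi}_i\hat{\xi}_i}$ share the same column space: the columns of $\Gamma_{\hat{\xi}_i\hat{\xi}_i}$ indexed by the off-diagonal products $\hat{\xi}_{ij}\hat{\xi}_{ik}$ with $j>k$ merely duplicate those with $j<k$ already present in $\hat{\Gamma}_{\hat{\xi}_i}$, so discarding them does not change the rank. Consequently
\begin{align*}
\text{rank}([\hat{\Gamma}_{\hat{\xi}_i},\,\Gamma_{x_iu_i},\,\Gamma_{x_i\eta_i}])=\text{rank}([\Gamma_{\hat{\xi}_i\hat{\xi}_i},\,\Gamma_{x_iu_i},\,\Gamma_{x_i\eta_i}]).
\end{align*}
Finally, since $\hat{\Psi}_{0}$ has exactly $\frac{(n+n_z)(n+n_z+1)}{2}+n(m+q)$ columns, the hypothesis \eqref{rankconpinew} forces this common rank to equal the number of columns, i.e. $\hat{\Psi}_{0}$ has full column rank.

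The argument involves no genuine obstacle; the only point requiring care is the bookkeeping that distinguishes this lemma from Lemma \ref{lem4}, namely confirming that replacing $\Gamma_{\hat{\xi}_iu_i},\Gamma_{\hat{\xi}_i\eta_i}$ by $\Gamma_{x_iu_i},\Gamma_{x_i\eta_i}$ lowers the relevant column counts from $(n+n_z)(m+q)$ to $n(m+q)$, so that the full-column-rank target in \eqref{rankconpinew} is correspondingly smaller and still matches the width of $\hat{\Psi}_{0}$.
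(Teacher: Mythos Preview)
Your proposal is correct and follows exactly the approach the paper intends: the paper simply states that the proof is similar to that of Lemma~\ref{lem4} and omits the details, and your argument is precisely that adaptation, with the appropriate bookkeeping that replaces $\Gamma_{\hat{\xi}_iu_i},\Gamma_{\hat{\xi}_i\eta_i}$ by $\Gamma_{x_iu_i},\Gamma_{x_i\eta_i}$ and adjusts the column count to $n(m+q)$.
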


   \begin{proof}
   	The proof of this lemma is similar to that of Lemma \ref{lem4} and is thus omitted.
   \end{proof}

   \begin{lem}
   	The matrix $\hat{\Psi}_{PI,ik}'$ in \eqref{pinew2} has full column rank for any $k\in \mathbb{N}$ if
   	\begin{align}\label{rankconpi2}
   		\textup{rank}([\Gamma_{\hat{\xi}_i\hat{\xi}_i}])=&\frac{(n+n_z)(n+n_z+1)}{2}
   	\end{align}
   \end{lem}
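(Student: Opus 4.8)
The plan is to mirror the contradiction argument used in the proof of Lemma \ref{lem3-1}, but to exploit the fact that the substitutions $K_{k+1}=-J^TP_k$ and $\begin{bmatrix}E_i\\0\end{bmatrix}^TP_k$ have already been built into \eqref{pinew2}, so that the two auxiliary unknown blocks are no longer free. First I would suppose, for contradiction, that there is a nonzero symmetric matrix $W_m$ with $\hat{\Psi}_{PI,ik}'\,\text{vecs}(W_m)=0$, and I would aim to conclude $W_m=0$.

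The key step is to recognize that \eqref{pinew2} is exactly \eqref{pilinearhat} evaluated along the constraint surface on which the second and third unknown blocks are tied to the first through $U_m=-J^TW_m$ and $Z_m=\begin{bmatrix}E_i\\0\end{bmatrix}^TW_m$. Substituting these two relations into the expressions \eqref{l13} for $\Omega_1,\Omega_2,\Omega_3$ from the proof of Lemma \ref{lem3-1}, I would observe that $\Omega_2=J^TW_m+U_m=0$ and $\Omega_3=\begin{bmatrix}E_i\\0\end{bmatrix}^TW_m-Z_m=0$ automatically, and that the cross terms in $\Omega_1$ vanish because they are multiples of $\Omega_2$ and $\Omega_2^T$. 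Hence $\Omega_1$ collapses to the symmetric matrix $Y_k^TW_m+W_mY_k$, and the identity \eqref{l14} reduces $\hat{\Psi}_{PI,ik}'\,\text{vecs}(W_m)$ to $\hat{\Gamma}_{\hat{\xi}_i}\,\text{vecs}(Y_k^TW_m+W_mY_k)$.

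From the resulting equation $\hat{\Gamma}_{\hat{\xi}_i}\,\text{vecs}(Y_k^TW_m+W_mY_k)=0$, I would invoke the rank condition \eqref{rankconpi2}. Since $\hat{\Gamma}_{\hat{\xi}_i}$ has exactly $\frac{(n+n_z)(n+n_z+1)}{2}$ columns and, as established in the proof of Lemma \ref{lem4}, $\text{rank}(\hat{\Gamma}_{\hat{\xi}_i})=\text{rank}(\Gamma_{\hat{\xi}_i\hat{\xi}_i})$, the condition \eqref{rankconpi2} makes $\hat{\Gamma}_{\hat{\xi}_i}$ full column rank, so $\text{vecs}(Y_k^TW_m+W_mY_k)=0$, i.e. $Y_k^TW_m+W_mY_k=0$. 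Finally, since Property 1 of the Kleinman iteration \eqref{iteraeq1}-\eqref{iteraeq2} guarantees $\sigma(Y_k)\subset\mathbb{C}_-$, this Lyapunov equation admits only the trivial solution $W_m=0$, contradicting $W_m\neq0$ and completing the argument.

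The main obstacle is not any single hard estimate but rather the bookkeeping in the second step: one must carefully verify that feeding the constraints $U_m=-J^TW_m$ and $Z_m=\begin{bmatrix}E_i\\0\end{bmatrix}^TW_m$ into $\Omega_1$ really does annihilate every term except $Y_k^TW_m+W_mY_k$, and one must be attentive to the fact that the relevant full column rank refers to $\hat{\Gamma}_{\hat{\xi}_i}$ (with only $\frac{(n+n_z)(n+n_z+1)}{2}$ columns), not to $\Gamma_{\hat{\xi}_i\hat{\xi}_i}$ itself; conflating the two would spuriously suggest a much larger rank requirement and obscure precisely the weakening of the solvability condition that \eqref{rankconpi2} is meant to deliver.
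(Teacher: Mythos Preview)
Your proposal is correct and follows exactly the approach the paper indicates: the paper's own proof is omitted with the remark that it ``is similar to that of Lemma~\ref{lem3-1},'' and you have faithfully carried out that similarity by recognizing \eqref{pinew2} as \eqref{pilinearhat} restricted to the constraint surface $U_m=-J^TW_m$, $Z_m=\begin{bmatrix}E_i\\0\end{bmatrix}^TW_m$, which forces $\Omega_2=\Omega_3=0$ and collapses the contradiction argument to the Lyapunov equation $Y_k^TW_m+W_mY_k=0$ under the reduced rank condition \eqref{rankconpi2}. Your attention to the distinction between $\hat{\Gamma}_{\hat{\xi}_i}$ and $\Gamma_{\hat{\xi}_i\hat{\xi}_i}$ is also on point and consistent with the paper's use of \eqref{l14}.
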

   \begin{proof}
   	The proof of this lemma is similar to that of Lemma \ref{lem3-1} and is thus omitted.
   \end{proof}

   \begin{rem}\label{rem11}
   	 Compared with \eqref{pilinearhat}, the number of the unknown variables of \eqref{pinew2} is reduced by $(n+n_z)(m+q)$. Thus, our improved PI-based Algorithm has significantly reduced the computational cost of the PI-based algorithm in Section \ref{sec3-1}. Moreover,
    The rank condition \eqref{rankconpi2} is satisfied automatically once \eqref{rankconpinew} is. Thus, \eqref{rankconpinew} is the only rank requirement for the improved PI-based Algorithm in this subsection.  Since the column dimension of the matrix to be tested is reduced by $n_z(m+q)$, the rank condition \eqref{rankconpinew} is much milder than that of \eqref{rankconpihat},
   \end{rem}

   Our improved PI-based Algorithm is summarized as Algorithm \ref{algpi}.

   \begin{algorithm}
   	\caption{The Improved PI-based Algorithm  for CORP}\label{algpi}
   	\begin{algorithmic}[1]
   		\State $i\leftarrow 1$
   		\Repeat
   		\State Find a feedback gain $K_{0}$ such that $Y+JK_0$ is a Hurwitz matrix. Apply an initial input $u_i^0=K_{0}\hat{\xi}_i+\delta_i$ with exploration noise $\delta_i$. Choose a proper $t_0$ such that $|\rho_i|$ is small enough.
   		\State Collecting data from $t_0$ until the rank condition \eqref{rankconpinew} is satisfied.
   		\State Solve $B$ and $E_i$ from \eqref{pilinearnew}. Obtain $J$ and $\begin{bmatrix}
   			E_i\\0
   		\end{bmatrix}$.
   		\Repeat
   		\State Solve $P_{k}$ from (\ref{pinew2}). $K_{k+1}=-J^TP_k$. $k\leftarrow k+1$.
   		\Until{$|P_{k}-P_{k-1}|<\varepsilon$ with a sufficiently small constant $\varepsilon>0$}
   		\State $K^*=[K_x^*,K_z^*]=\omega^{-1}K_k$ with $0<\omega \leq 2\text{Re}(\lambda_i), \forall \lambda_i\in \sigma(DH), i=1,\cdots,N$.
   		\State Obtain the following controller
   		\begin{align}\label{controllerpi}
   			u_i^*=\frac{K_{x}^*}{\sum_{j\in \mathcal{N}_i} a_{ij}}(\sum_{j\in \mathcal{N}_i^-}a_{ij}(x_i-x_j)+a_{i0}x_i)+K_{z}^*\hat{z}_i
   		\end{align}
   		\State $i=i+1$
   		\Until{$i=N+1$}
   	\end{algorithmic}
   \end{algorithm}

   For comparison of the PI algorithm in Section \ref{sec3-1} and the improved PI algorithm in this subsection, let us still consider the example with $n=10,m=8,p=4,q=20,n_z=40, N= 5$.  TABLE \ref{tab1} and TABLE \ref{tab2} show
   the numbers of unknown variables and rank conditions of these two algorithms, respectively.  The contrast is stark.

   \begin{table}[!ht]
   	\caption{Comparison of Computational Complexity}\label{tab1}
   	\center
   	\begin{tabular}{|c|c|c|}\hline

   		Algorithm & Equation Number &Unknown variables \\ \hline
   		PI Algorithm & \eqref{pilinearhat}	& 2675 \\  \hline
   		
   		Improved PI Algorithm &\eqref{pinew2} & 1275 \\ \hline
   		
   	\end{tabular}
   	
   \end{table}

   \begin{table}[!ht]
   	\caption{Comparison of rank condition}\label{tab2}
   	\center
   	\begin{tabular}{|c|c|}\hline

   		Algorithm & Rank Condition for Data Collecting \\ \hline
   		PI Algorithm & 	
   		$\textup{rank}([\Gamma_{\hat{\xi}_i\hat{\xi}_i}, \Gamma_{\hat{\xi}_iu_i}, \Gamma_{\hat{\xi}_i\eta_i}])= 2675$ \\ \hline
   		
   		Improved PI Algorithm& $ \textup{rank}([\Gamma_{\hat{\xi}_i\hat{\xi}_i}, \Gamma_{x_iu_i}, \Gamma_{x_i\eta_i}])=1555$   \\  \hline
   		
   	\end{tabular}
   	
   \end{table}

   \subsection{An Improved VI-based Algorithm}\label{sec4-2}

    Similar to PI algorithm,  we can also further improve our VI-based Algorithm in Section \ref{sec3-2} by the following two steps.
   \begin{enumerate}
   	\item Solve \eqref{pilinearnew} for $B$ and $E_i$ and hence obtain $J$ and $\begin{bmatrix}
   		E_i\\0
   	\end{bmatrix}$.
   	
   	\item With $J$ and $\begin{bmatrix}
   		E_i\\0
   	\end{bmatrix}$ identified, \eqref{vilinearhat} can be reduced to
   	\begin{align}\label{vinew2}
   		\hat{\Psi}_{VI,i}'\begin{bmatrix}
   			\text{vecs}(H_k)
   		\end{bmatrix}=&\hat{\Phi}_{VI, ik}'
   	\end{align}
   	where
   	\begin{align*}
   		\hat{\Psi}_{VI,i}'=&\Gamma_{\hat{\xi}_i\hat{\xi}_i}M\\
   		\hat{\Phi}_{VI, ik}'=&[\delta_{\hat{\xi}_i}-2\Gamma_{\hat{\xi}_i\hat{\gamma}_i}M\\&-2\Gamma_{\hat{\xi}_iu_i}(I_{n+n_z}\otimes J^T)M\\&-2\Gamma_{\hat{\xi}_i\eta_i}(I_{n+n_z}\otimes \begin{bmatrix}
   			E_i\\0
   		\end{bmatrix}^T)M]\text{vecs}(P_k)
   	\end{align*}
   	In this way, $H_k$ can be iteratively solved from  \eqref{vinew2}.
   \end{enumerate}

 As in Algorithm \ref{vialg1}, $\tilde{P}_{k+1}$ can be expressed as $\tilde{P}_{k+1}\triangleq P_k+\epsilon_k (H_k-P_kJJ^TP_k+I_{n+n_z})$.  Thus, by iteratively applying the solution of \eqref{vinew2} to Algorithm \ref{vialg1}, one can obtain the approximate solution to \eqref{Ricca} and the feedback control gain  \eqref{gainrevi} without using the information of system matrices.

   The solvability of \eqref{vinew2} is guaranteed by the following lemma.
   \begin{lem}
   	The matrix $\hat{\Psi}_{VI,i}'$ in \eqref{vinew2} has full column rank if
   	\begin{align}\label{rankconvi2}
   		\textup{rank}([\Gamma_{\hat{\xi}_i\hat{\xi}_i}])=&\frac{(n+n_z)(n+n_z+1)}{2}
   	\end{align}
   \end{lem}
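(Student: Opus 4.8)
The plan is to show that the full column rank of $\hat{\Psi}_{VI,i}' = \Gamma_{\hat{\xi}_i\hat{\xi}_i}M$ follows from the rank condition \eqref{rankconvi2} by exploiting the same relation between $\Gamma_{\hat{\xi}_i\hat{\xi}_i}M$ and $\hat{\Gamma}_{\hat{\xi}_i}$ that was established in the proof of Lemma \ref{lem4}. Recall that in that proof, the identity $(a\otimes a)^T M = \text{vecv}(a)^T$ for any column vector $a\in \mathbb{R}^{n+n_z}$ was derived from the defining property $M\text{vecs}(Q)=\text{vec}(Q)$ of the matrix $M$, together with the elementary fact $a^TQa=(a\otimes a)^T\text{vec}(Q)=\text{vecv}(a)^T\text{vecs}(Q)$. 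Applying this row by row to $\Gamma_{\hat{\xi}_i\hat{\xi}_i}$, whose rows are of the form $\int (\hat{\xi}_i\otimes \hat{\xi}_i)\,d\tau$, immediately yields $\Gamma_{\hat{\xi}_i\hat{\xi}_i}M=\hat{\Gamma}_{\hat{\xi}_i}$.

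With this identity in hand, the argument reduces to a single rank equality. First I would write $\hat{\Psi}_{VI,i}' = \Gamma_{\hat{\xi}_i\hat{\xi}_i}M = \hat{\Gamma}_{\hat{\xi}_i}$. Then I would observe that $\text{rank}(\hat{\Gamma}_{\hat{\xi}_i})=\text{rank}(\Gamma_{\hat{\xi}_i\hat{\xi}_i})$, which holds because $\hat{\Gamma}_{\hat{\xi}_i}$ and $\Gamma_{\hat{\xi}_i\hat{\xi}_i}$ contain exactly the same information: the columns of $\Gamma_{\hat{\xi}_i\hat{\xi}_i}$ corresponding to the symmetric pair of off-diagonal entries are duplicated, so passing to $\hat{\Gamma}_{\hat{\xi}_i}$ merely merges these duplicate columns without changing the column space. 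Consequently $\text{rank}(\hat{\Psi}_{VI,i}')=\text{rank}(\Gamma_{\hat{\xi}_i\hat{\xi}_i})$, and the hypothesis \eqref{rankconvi2} states precisely that this rank equals $\frac{(n+n_z)(n+n_z+1)}{2}$, which is the number of columns of $M$ and hence of $\hat{\Psi}_{VI,i}'$. Therefore $\hat{\Psi}_{VI,i}'$ has full column rank.

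Since this is a simplified, decoupled version of Lemma \ref{lem4} with the blocks $-2\Gamma_{\hat{\xi}_iu_i}$ and $2\Gamma_{\hat{\xi}_i\eta_i}$ now absorbed into the known right-hand side $\hat{\Phi}_{VI,ik}'$, no contradiction argument is needed at all; the conclusion is a direct rank computation. The only point requiring any care is the justification that $M$ has full column rank (so that composing with $M$ on the right does not create a spurious kernel) and that the column-merging step preserves rank, but both are immediate from the construction of $M$ and $\text{vecv}$. I therefore expect the proof to be very short, and I would simply state that it parallels the proof of Lemma \ref{lem4} upon dropping the input and exosystem blocks, then write out the single chain of rank equalities $\text{rank}(\hat{\Psi}_{VI,i}')=\text{rank}(\hat{\Gamma}_{\hat{\xi}_i})=\text{rank}(\Gamma_{\hat{\xi}_i\hat{\xi}_i})=\frac{(n+n_z)(n+n_z+1)}{2}$ to complete the argument.
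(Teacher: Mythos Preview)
Your proposal is correct and follows essentially the same approach as the paper: the paper itself simply states that the proof is similar to that of Lemma~\ref{lem4} and omits the details, and your argument is precisely the specialization of that proof to the single block $\Gamma_{\hat{\xi}_i\hat{\xi}_i}M$. The chain of equalities $\text{rank}(\hat{\Psi}_{VI,i}')=\text{rank}(\hat{\Gamma}_{\hat{\xi}_i})=\text{rank}(\Gamma_{\hat{\xi}_i\hat{\xi}_i})$ is exactly what the omitted proof would contain.
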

   \begin{proof}
   	The proof of this lemma is similar to that of Lemma \ref{lem4} and is thus omitted.
   \end{proof}

   \begin{rem}
   	Since \eqref{vinew2} is solvable if the rank condition \eqref{rankconpinew} is ensured,  \eqref{rankconpinew} is the only rank requirement for our VI-based Algorithm in this subsection.  The rank condition \eqref{rankconpinew} is milder than \eqref{rankconvihat} as the column dimension of the matrix to be tested is reduced by $n_z(m+q)$.
   \end{rem}

   \begin{rem}
   	Since, in our improved VI-based Algorithm, $H_k$ is iteratively solved from \eqref{vinew2}, the number of the unknown variables of \eqref{vinew2} is reduced by $(n+n_z)(m+q)$ compared with \eqref{vilinearhat}. Thus, the improved VI-based Algorithm  significantly reduces the computational cost of the original algorithm.
   \end{rem}

\begin{rem}\label{new11}
Another merit of our improved PI and VI algorithms is that  the unknown variables of either \eqref{pinew2} or \eqref{vinew2} are independent of the index $i$.
By taking advantage of the connectivity of the graph,  there is no need to solve  \eqref{pinew2} or \eqref{vinew2} for all $i = 1, \cdots, N$. For example, 
let $\mathcal{G}^-=\{\mathcal{V}^-, \mathcal{E}^- \}$ be a subgraph of $\mathcal{G}$ with
$\mathcal{V}^-=\{1,\cdots, N \}$  and $\mathcal{E}^-$ being obtained from $\mathcal{E}$ by removing all nodes incident to node $0$. Suppose $\mathcal{G}^-$ contains a spanning tree with one of the children of $\mathcal{V}^-$ as its root. Then it suffices to solve \eqref{pinew2} or \eqref{vinew2} with respect to some index $1 \leq i \leq N$ corresponding to the root of $\mathcal{G}^-$. Then the solution of \eqref{pinew2} or \eqref{vinew2} can be passed to other followers by the graph $\mathcal{G}^-$.
 \end{rem}

   Our improved VI-based algorithm is summarized as Algorithm \ref{vialg2}.

   \begin{algorithm}
   	\caption{The Improved VI-based  Algorithm for CORP}\label{vialg2}
   	\begin{algorithmic}[1]
   		\State $i=1$
   		\Repeat
   		\State Choose $P_0=P_0^T> 0$. $k,q\leftarrow 0$. Apply any locally essentially bounded initial input $u_i^0$. Choose a proper $t_0$ such that $|\rho_i|$ is small enough.
   		\State Collecting data from $t_0$ until the rank condition \eqref{rankconpinew} is satisfied.
   		\State Solve $B$ and $E_i$ from \eqref{pilinearnew}. Obtain $J$ and $\begin{bmatrix}
   			E_i\\0
   		\end{bmatrix}$.
   		\Loop
   		\State Solve $H_k$ from \eqref{vinew2}
   		\State $\tilde{P}_{k+1}\leftarrow P_k+\epsilon_k (H_k-P_kJJ^TP_k+I_{n+n_z})$
   		\If{$\tilde{P}_{k+1}\notin B_q$}
   		\State $P_{k+1}\leftarrow P_0$. $q\leftarrow q+1$.
   		\ElsIf{$|\tilde{P}_{k+1}-P_k|/\epsilon_k<\varepsilon$}
   		\State \Return $K_k=-J^TP_k$
   		\Else
   		\State $P_{k+1}\leftarrow \tilde{P}_{k+1}$
   		\EndIf
   		\State $k \leftarrow k+1$
   		\EndLoop
   		\State $K^*=[K_x^*,K_z^*]=\omega ^{-1}K_k$ with $0<\omega \leq 2\text{Re}(\lambda_i), \forall \lambda_i\in \sigma(DH), i=1,\cdots,N$.
   		\State Obtain the following controller
   		\begin{align}\label{controllervi}
   			u_i^*=\frac{K_{x}^*}{\sum_{j\in \mathcal{N}_i} a_{ij}}(\sum_{j\in \mathcal{N}_i^-}a_{ij}(x_i-x_j)+a_{i0}x_i)+K_{z}^*\hat{z}_i
   		\end{align}
   		\State $i=i+1$
   		\Until{$i=N+1$}
   	\end{algorithmic}
   \end{algorithm}

   To compare the VI algorithm in Section \ref{sec3-2} and the improved VI algorithm in this subsection, let us still consider the example with $n=10,m=8,p=4,q=20,n_z=40, N= 5$.  TABLE \ref{tab1vi} and TABLE \ref{tab2vi} show
   the numbers of unknown variables and rank conditions of these two VI-based algorithms, respectively.

   \begin{table}[!ht]
   	\caption{Comparison of Computational Complexity}\label{tab1vi}
   	\center
   	\begin{tabular}{|c|c|c|}\hline

   		Algorithm & Equation Number &Unknown variables \\ \hline
   		VI Algorithm & \eqref{vilinearhat}	& 2675 \\  \hline
   		
   		Improved VI Algorithm &\eqref{vinew2} & 1275 \\ \hline
   		
   	\end{tabular}
   	
   \end{table}

   \begin{table}[!ht]
   	\caption{Comparison of rank condition}\label{tab2vi}
   	\center
   	\begin{tabular}{|c|c|}\hline

   		Algorithm & Rank Condition for Data Collecting \\ \hline
   		VI Algorithm & 	
   		$\textup{rank}([\Gamma_{\hat{\xi}_i\hat{\xi}_i}, \Gamma_{\hat{\xi}_iu_i}, \Gamma_{\hat{\xi}_i\eta_i}])= 2675$ \\ \hline
   		
   		Improved VI Algorithm& $ \textup{rank}([\Gamma_{\hat{\xi}_i\hat{\xi}_i}, \Gamma_{x_iu_i}, \Gamma_{x_i\eta_i}])=1555$   \\  \hline
   		
   	\end{tabular}
   	
   \end{table}

	\section{Conclusions}\label{sec6}
	
	In this paper, we have investigated the cooperative output regulation problem for unknown linear homogeneous MASs using data-driven distributed internal model approach.
	Compared with existing results in the literature,
	this paper has offered four  features. First, by leveraging the technique from \cite{su2012general}, {our approach has successfully solved the problem
for MASs whose followers are governed by MIMO unknown linear systems. Second, we have removed the restrictive assumption that  the communication network is acyclic.
Third, by using the normalized virtual error and lumping all the neighbors’ output into one single term, we have derived a set of much simpler equations
and obtained milder rank conditions compared to the algorithms of \cite{gao2021reinforcement}. Fourth, we have further improved our algorithms by
reducing high dimensional equations to lower dimensional equations.

%
%\begin{IEEEbiography}[{\includegraphics[width=1in,height=1.25in,clip,keepaspectratio]{lqlin.jpg}}]
%	{Liquan Lin} received the B.Eng. degree from Huazhong University of Science and Technology, Wuhan, China,  in 2021. He is currently pursuing the Ph.D. degree in the Department of Mechanical and Automation Engineering, The Chinese University of Hong Kong, Hong Kong SAR, China.
%	
%	His current research interests include multi-agent systems, output regulation, data-driven control, and reinforcement learning.
%	
%\end{IEEEbiography}
%
%\newpage
%
%\begin{IEEEbiography}[{\includegraphics[width=1in,height=1.25in, clip,keepaspectratio]{JieHuang.jpg}}]
%	{Jie Huang} (M'91-SM'94-F'05) is a Choh-Ming Li Research Professor of Mechanical and Automation Engineering, The Chinese University of Hong Kong (CUHK), and Associate Dean (Research) of Faculty of Engineering, CUHK.
%	His research interests include nonlinear control theory and applications, multi-agent systems, game theory, and flight guidance and control.
%	Dr. Huang is a Life Fellow of IEEE, and Fellow of IFAC, CAA, and HKIE.
%\end{IEEEbiography}

\end{document}